\documentclass[a4paper]{article}
\usepackage[english]{babel}
\usepackage[a4paper, top=3cm, bottom=3cm, left=3cm, right=3cm]{geometry}
\usepackage[T1]{fontenc}
\usepackage[utf8]{inputenc}
\usepackage{amsmath} 
\usepackage{amsfonts} 
\usepackage{amssymb}
\usepackage{amsthm}
\usepackage{graphicx}
\usepackage{fancyhdr}
\usepackage{xcolor}

\usepackage{pgfplotstable}
\usepackage{booktabs}
\pgfplotsset{compat=1.18}

\usepackage{url}

\theoremstyle{plain}

\newtheorem{thm}{Theorem}
\newtheorem{defn}{Definition}
\newtheorem{rem}{Remark}

\title {Numerical entropy production in finite volume $P_0P_M$ ADER schemes}
\author{M. Semplice%
\thanks{Università degli Studi dell'Insubria - Dipartimento di Scienza e Alta Tecnologia - Como (Italy) {\sl matteo.semplice@uninsubria.it}}
, A. Zappa%
\thanks{Università degli Studi dell'Insubria - Dipartimento di Scienze Teoriche ed Applicate - Como (Italy) {\sl azappa1@uninsubria.it}}}
\date{}
\newcommand{\dx}{\,dx}
\newcommand{\dt}{\,dt}

\usepackage{todonotes}
\let\revUno\relax
\let\revDue\relax

\graphicspath{{Immagini/}}
\begin{document}
\bibliographystyle{alpha}

\maketitle

\section*{Abstract}
{\revDue We consider the numerical integration of conservation laws endowed with an entropy inequality and we study the residual of the scheme on this inequality, which represents the numerical entropy production.} This idea has been introduced and exploited in Runge-Kutta finite volume methods, where the numerical entropy production has been used as an indicator in adaptive schemes, since it scales as the local truncation error of the method for smooth solutions and it highlights the presence of discontinuities and their kind.
\\
The aim of this work is to extend this idea to finite volume $P_0P_M$ ADER timestepping techniques. We show that the numerical entropy production can be defined also in this context and it provides a scalar quantity computable for each space-time volume which, under grid refinement, decays to zero with the same rate of convergence of the scheme for smooth solutions. Its size gradually increases when the local solution regularity lowers, remaining bounded up to contact discontinuities and divergent on shock waves. 
Theoretical results are proven in a multi-dimensional setting on {\revUno arbitrary polygonal grids}. We also present numerical evidence showing that it is essentially negative definite. Moreover, we propose an example of $p$-adaptive scheme that uses the numerical entropy production as a-posteriori smoothness indicator. The scheme locally modifies its order of convergence with the purpose of removing the oscillations due to the high-order of accuracy of the scheme.

\paragraph{Keywords} 
Finite volume $P_0P_M$ schemes, 
hyperbolic systems, multiple space dimensions,
numerical entropy, p-adaptivity
\section{Introduction}

We consider systems of conservation laws of the form
\begin{equation}
    \partial_tu+\nabla_x\cdot f(u)=0 \label{conslaw}
\end{equation}
 where the solutions $u:\mathbb{R}^+\times{\revDue \mathbb{R}^{\nu}}\rightarrow\mathbb{R}^m$ are functions of time $t$ and a spatial variable $x\in{\revDue \mathbb{R}^{\nu}}$ and the flux is a function $f:\mathbb{R}^m\rightarrow\mathbb{R}^m$. We assume that the system is hyperbolic, which means that there exists a complete system of real eigenvectors of the Jacobian matrix of the  flux in direction $\vec{n}$ for every possible physical state $u$ and $\vec{n}\in {\revDue S^{\nu-1}}$.

 When developing highly efficient numerical schemes, one has to consider the possible presence in the solution of steep gradients or discontinuities that may develop in finite time even from smooth initial data.  In the vicinity of such zones, the real accuracy of formally high order schemes is reduced and spurious oscillations may not be effectively controlled by the Essentially Non-Oscillatory reconstructions employed. On the other hand, in smooth areas of the solutions, it is more cost-efficient to apply an high-order scheme on a coarsen mesh than a first-order one on an extremely refined grid. One is thus lead to develop adaptive numerical schemes.

 Adaptive schemes have the ability to adapt locally (in space and time) to the solution being computed by (any combination of) changing the mesh size refining or coarsening the computational mesh ($h$-refinement or AMR), redistributing the cells without changing their global number ($r$-refinement), changing the order of the applied scheme ($p$-refinement, MOOD), or changing the features of the scheme (e.g. applying the local characteristic decomposition).
 
 A common requirement of any adaptive procedure is the need of an error indicator to drive the adaption. To this end, in the context of finite volume schemes for hyperbolic conservation laws, the concept of numerical entropy production has been introduced in \cite{Puppo04:numerical:entropy} and extended to high order \cite{PS11:numerical:entropy} and balance laws \cite{PS16:entropy:balance}.
The numerical entropy production has been used to drive adaptive schemes in the AMR \cite{Ersoy:13,Abbate:19,SCR:cwenoAMR}, 
in the $p$-adaptive \cite{PS11:numerical:entropy} and in the MOOD framework \cite{SL:18:AMRMOOD}.
Furthermore, \cite{2019:Lozano:entAdj} exploits the numerical entropy production in steady state computations to drive grid adaptation when the quantity of interest is the aerodynamics drag of an object immersed in the computational domain.
Finally, the numerical entropy production has also been exploited for uses other then grid adaptivity. For example, in \cite{Guermond:11} an efficient entropy viscosity method for nonlinear conservation laws is proposed, in which the scheme is locally modified adding numerical diffusion proportionally to the local production of entropy.

 The employment of the numerical entropy production is natural for several reasons. Firstly, it is possible that solutions for a system of conservation laws may lose their regularity in a finite time even if the initial data are smooth, giving rise to discontinuities. In this case, the existence of strong and the uniqueness of weak solutions are lost. Admissible weak solutions should however satisfy the entropy inequality
 \begin{equation}
     \partial_t\eta(u)+\nabla_x\cdot \psi(u)\leq0 \label{entrin}
 \end{equation}
 where $\eta$ is a scalar convex entropy function and $\psi$ is the corresponding entropy flux such that it satisfies the compatibility condition $\nabla^T\eta J_f(u)=\nabla^T\psi$.
 For exact solutions $\eqref{entrin}$ holds as an equality unless a shock wave is present in the solution. For systems of conservation laws describing physical systems, one can usually employ the physical entropy.

 The numerical entropy production $S_j^n$ is a residual of $\eqref{entrin}$ computed on the numerical solutions of $\eqref{conslaw}$ and introducing numerical entropy fluxes $\Psi$ which are consistent with the exact entropy flux $\psi$. For high order finite volume schemes it has been shown that $S_j^n$ converges to zero with the same rate of the local truncation error for smooth solutions and it is bounded by terms of order $\mathcal{O}\left(1/\Delta t\right)$ if the solution presents a shock wave. 
 For this reasons, $S_j^n$ can be considered as an a-posteriori error indicator for the finite volume scheme.
 
 There are also some theoretical studies about the sign of the numerical entropy production. In \cite{PS11:numerical:entropy}, it has been proved that it is essentially negative definite for a first-order Rusanov scheme applied to a scalar conservation law. For high-order schemes, in depth studies on the sign have been conducted in \cite{Lozano:18,2019:Lozano:entRKImpl}, where it is shown that in general a high order Runge-Kutta time discretization will produce spurious numerical entropy and that entropy stability of the resulting scheme can be obtained under a CFL restriction also in the implicit case. We point out that in our definition of the numerical entropy production, we do not need to use a specific numerical entropy flux, but any numerical flux compatible with $\psi$ can be employed.

 The main idea of this work is to extend the definition of $S_j^n$ given in \cite{PS11:numerical:entropy} to Finite Volume Arbitrary Accuracy DERivative Riemann problem (FV-ADER) schemes, which are fully-discrete schemes that solve the high-order Riemann problem approximately without semidiscretization nor Runge-Kutta methods. More precisely, the approach of \cite{Dumbser:2008:PnPm} is followed, which is more flexible than previous ones based on an analytic or semi-analytic version of the Cauchy–Kovalewskaya or Lax–Wendroff procedure. We point out that ADER schemes can be reinterpreted in the deferred correction approach (DeC) {\revUno \cite{2021:DeC,2024:ADERDeC}} and therefore as Runge-Kutta methods. Nevertheless,  it seems not practical to use the corresponding Butcher tableaux directly for the computation of the numerical entropy production.
 
 In general, ADER schemes of \cite{Dumbser:2008:PnPm} use two sets of piecewise polynomials with degree $N$ and $M$. The first set represents the spatial solution in each element at each time step, whereas the second set contains space-time polynomials used for the evolution in time of the solution. These latter are initialized at time $t^n$ via a reconstruction procedure and evolved in time through a local continuous space-time Galerkin method \cite{Dumbser:2008:PnPm}, giving a high-order predictor for the conserved variables, which is then used to compute the $N$-th degree spatial polynomial at time $t^{n+1}$ in a conservative manner. For this reason, they are also named $P_NP_M$ schemes. We are interested in the $P_0P_M$ approach, which corresponds to finite volume-type schemes of order $M+1$.
 
 The previous results of \cite{PS11:numerical:entropy} are here extended in two ways: first, the semidiscrete approach is replaced with a fully-discrete ADER time advancement and, second, the theoretical decay of the numerical entropy production is studied in multidimensional setting on arbitrary meshes. In this paper a simple example of the application of the numerical entropy production is given in a $p$-adaptive scheme, in which $S_j^n$ is used as a-posteriori error indicator.

 The paper is organized as follows. In Section 2 we recall the construction of semidiscrete finite volume schemes evolved in time with a Runge-Kutta method. We also recall the definition of the numerical entropy production and its properties. In Section 3 we introduce the FV-ADER $P_0P_M$ schemes and we extend the definition of the numerical entropy production to this setting. In Section 4 we test numerically the properties of $S_j^n$ in the one-dimensional and two-dimensional case. We show numerical evidence of the essentially negative sign of $S_j^n$ and its different behaviour with respect to the kind of wave we are dealing with. Finally, in Section 5 we present an example of $p$-adaptive scheme that exploits the numerical entropy production as indicator to decrease the order of the scheme in the regions in which one wants to avoid spurious oscillations.

\section{Numerical Entropy for Semidiscrete Runge-Kutta schemes}

We consider conservation laws of the form
\begin{equation}
	\partial_tu+\nabla_x\cdot f(u)=0  \label{cl}
\end{equation}
coupled with the entropy inequality
\begin{equation}
    \partial_t\eta(u)+\nabla_x\cdot \psi(u)\leq0
\end{equation}
over the domain $\Omega = \bigcup_{j\in\mathbb{Z}}\Omega_j$ , where $\Omega_j$ are cells partitioning the computational domain $\Omega$.
Dividing \eqref{cl} by the cell volume $|\Omega_j|$ and integrating over $\Omega_j$,  we obtain
\begin{equation*}
	\dfrac{d}{dt}\overline{u}_j(t) +\dfrac{1}{|\Omega_j|}\int_{\partial\Omega_j}f(u(t,x))\cdot\hat{n}\dx=0
\end{equation*}
where 
\begin{equation*}
    \overline{u}_j(t)=\dfrac{1}{|\Omega_j|}\int_{\Omega_j}u(t,x)\dx 
\end{equation*}
is the exact cell average in $\Omega_j$ and $\hat{n}$ the outgoing normal. 
Integrating also in time, we get
\begin{equation}
	\overline{u}_j^{n+1} = \overline{u}_j^n-\dfrac{1}{|\Omega_j|}\int_{t^n}^{t^{n+1}}\int_{\partial\Omega_j}f(u(\tau,x))\cdot\hat{n}\dx d\tau. 
    \label{exafv}
\end{equation}
Similarly, the entropy inequality becomes
\begin{equation}
	\overline{\eta}_j^{n+1} - \overline{\eta}_j^n + \dfrac{1}{|\Omega_j|}\int_{t^n}^{t^{n+1}}\int_{\partial\Omega_j}\psi(u(\tau,x))\cdot\hat{n}\dx d\tau\leq0. \label{exaei}
\end{equation}
The space integrals on $\partial\Omega_j$ can be replaced with suitable quadrature rules.

If the solution is smooth, inequality $\eqref{exaei}$ is actually an equality, whereas if it has a shock discontinuity, then the inequality holds strictly for the physical solution of the conservation law.

In the numerical solution, we approximate $\overline{u}_j^n$ with $\overline{U}_j^n$.  Since we are not able to compute $f(u(t,x_{j+1/2}))$ directly, we need a reconstruction, which allows to compute the values of $U$ at the boundary of the cells.
Let $R_j(t,x)$ be a local reconstruction, e.g. a polynomial of fixed maximum degree, that has $\int_{\Omega_j} R_j(t,x)\dx= |\Omega_j| \overline{U}_j(t) $ and which approximates accurately also the cell averages in the neighbors of $|\Omega_j|$. The operator $\mathcal{R}_j$ thus maps the cell averages in $\Omega_j$ and its neighbors to a reconstruction polynomial defined in $\Omega_j$. Then, for $x\in\partial\Omega_j$ we denote the inner and outer reconstructed values as
\begin{equation*}
	U^{-}(t,x)=R_j(t,x)
    \qquad 
	U^{+}(t,x)=R_k(t,x),
\end{equation*}
where $\Omega_k$ is the neighbor of $\Omega_j$ such that $x\in\Omega_j \cap \Omega_k$.
The outer value is well defined except at the corners of the cells, so it is computable at least for any Gaussian quadrature point on the edges of the cells in the mesh.

Then $f(u(t,x))$ in $\eqref{exafv}$ can be replaced by $\mathcal{F}(U^-(t,x),U^+(t,x))$, where $\mathcal{F}$ is a two-point numerical flux consistent with $f(u)$. Therefore, the numerical scheme becomes
\begin{equation}
	\overline{U}_j^{n+1} = \overline{U}_j^n -\dfrac{1}{|\Omega_j|}\int_{t^n}^{t^{n+1}}\int_{\partial\Omega_j}\mathcal{F}(U^-(\tau,x),U^+(\tau,x))\cdot\hat{n}\dx d\tau.
\end{equation}

Following \cite{PS11:numerical:entropy}, the numerical entropy production $S_j^n$ on each cell $\Omega_j$ is defined as the residual of the entropy inequality \eqref{exaei}, that is
\begin{equation}
    S_j^n = \dfrac{1}{\Delta t}\left( \overline{\eta(U^{n+1})}_j-\overline{\eta(U^n)}_j+\dfrac{1}{|\Omega_j|}\int_{t^n}^{t^{n+1}}\int_{\partial\Omega_j}\Psi(U^-(\tau,x),U^+(\tau,x))\cdot\hat{n}\dx d\tau\right)
\end{equation}
where $\Psi$ is a numerical flux consistent with the exact entropy flux $\psi(u)$.

If we apply a $p^{th}$-order Runge-Kutta method to evolve in time the semidiscrete scheme we get
\begin{equation}
    \overline{U}_{j}^{n+1} = \overline{U}_j^n - \Delta t\sum_{i=1}^{{\revDue \sigma}}b_iK_j^{(i)}
    \label{semidisc}
\end{equation}
where $K_j^{(i)}=\dfrac{1}{|\Omega_j|}\int_{\partial\Omega_j}\mathcal{F}(U^{(i),-},U^{(i),+})\cdot\hat{n}\dx$ is {\revDue the integral of the numerical flux evaluated on} the $i^{th}$ Runge-Kutta stage, whose cell averages are
\begin{equation*}
    \overline{U}_j^{(i)}=\overline{U}_j^n-\Delta t\sum_{k=1}^{i-1}a_{ik}K_j^{(k)}
\end{equation*}
and $a_{ik}$ and $b_i$ are the coefficients from the Butcher's tableaux $(A,b)$.

Therefore, for a finite volume Runge-Kutta scheme the numerical entropy production over the volume $[t^n,t^{n+1}]\times\Omega_j$ can be computed as
\begin{equation}
    S_j^n = \dfrac{1}{\Delta t}\left(\dfrac{1}{|\Omega_j|}\mathcal{Q}_{\Omega_j}(\eta(U^{n+1}))-\dfrac{1}{|\Omega_j|}\mathcal{Q}_{\Omega_j}(\eta(U^n))+\Delta t\sum_{i=1}^{{\revDue \sigma}}b_i\tilde{K}_j^{(i)}\right)\label{sfv}
\end{equation}
where $\tilde{K}_j^{(i)}=\dfrac{1}{|\Omega_j|}\int_{\partial\Omega_j}\Psi({U}^{(i),-},{U}^{(i),+})\cdot\hat{n}\dx$ and $\mathcal{Q}_{\Omega_j}$ is a $q^{th}$ order accurate quadrature rule in space.

\begin{rem}
 The computation of $S_j^n$ does not affect significantly the computational cost of the scheme. Indeed, most terms in $\eqref{sfv}$ are already available from previous computations and one just needs an extra reconstruction of the solution from the cell averages at time $t^{n+1}$, which can of course be reused at the beginning of the next timestep. 
 
 In order to have an even more efficient implementation, the entropy function $\eta$ can be added as an additional variable in the system of conservation laws, 
 reinitializing its value at the beginning of each time step as $\overline{\eta}_j^n=\eta(\overline{U}_j^n)$.
Applying the numerical scheme to the augmented system, one computes
\begin{equation*}
    \overline{\eta}_j^{n+1}=
    \overline{\eta}_j^n-\Delta t\sum_{i=1}^{{\revDue \sigma}}b_i\tilde{K}_j^{(i)}=
    \dfrac{1}{|\Omega_j|}\mathcal{Q}_{\Omega_j}(\eta(U^n))-\Delta t\sum_{i=1}^{{\revDue \sigma}}b_i\tilde{K}_j^{(i)}.
\end{equation*}
At this point, $S_j^n$ can be computed as
\begin{equation*}
    S_j^n=\dfrac{1}{\Delta t}\left(\dfrac{1}{|\Omega_j|}\mathcal{Q}_{\Omega_j}(\eta(U^{n+1}))-\overline{\eta}_j^{n+1}\right).
\end{equation*}
\end{rem}

In \cite{PS11:numerical:entropy} the following theorem is proved, for the one-dimensional case of a conservation law.
\begin{thm}\label{thm:FVRK}
Consider a $p^{th}$-order semidiscrete finite volume scheme of the form $\eqref{semidisc}$ and define the numerical entropy production as in $\eqref{sfv}$. Assume $\Delta t=\lambda\Delta x$, where $\lambda$ is the mesh ratio.
\begin{itemize}
	\item If the solution is regular, then the numerical entropy production in a fixed time step decays to zero as $\mathcal{O}(\Delta t^p)$ for $\Delta t\to0$.
	\item In cells in which the solution is not regular, $S_j^n\leq\dfrac{C}{\Delta t}$ for a constant $C$ which does not depend on $\Delta t$.
\end{itemize}
\end{thm}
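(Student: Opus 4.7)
The plan is to view $S_j^n$ as the residual, divided by $\Delta t$, of the exact entropy balance \eqref{exaei} when the exact solution, fluxes and integrals are replaced by their numerical counterparts. For a smooth exact solution the entropy equation holds as an equality, so the residual reduces to a sum of consistency errors, giving the rate in the first bullet. When the solution contains a shock, the residual is controlled only by the boundedness of $U$, $\eta(U)$ and the numerical fluxes, which gives the $1/\Delta t$ bound of the second bullet.

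For the smooth case, the first step is to subtract the exact entropy identity (the equality version of \eqref{exaei} valid in the smooth regime) from the definition \eqref{sfv}. This rewrites $\Delta t\, S_j^n$ as the sum of three discrepancies: (i) the quadrature and cell-average errors $\mathcal{Q}_{\Omega_j}(\eta(U^{m}))/|\Omega_j| - \overline{\eta(u(t^m,\cdot))}_j$ for $m\in\{n,n+1\}$, which are $O(\Delta x^{p+1})$ provided the spatial quadrature order $q$ and the reconstruction order are both at least $p$; (ii) the local truncation error of the $p$-th order RK discretization applied to the boundary flux integral, which is $O(\Delta t^{p+1})$; (iii) the consistency error of $\Psi$ with the exact flux $\psi$ evaluated at reconstructed boundary values, again $O(\Delta x^{p+1})$ by consistency and the matched reconstruction order. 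Under the assumption $\Delta t=\lambda\Delta x$, these three contributions combine to $\Delta t\,S_j^n = O(\Delta t^{p+1})$, hence $S_j^n = O(\Delta t^p)$.

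For the shock case, only $L^\infty$ (and $BV$) boundedness of $U$ together with continuity of $\eta$ and $\Psi$ are needed. Each quadrature average $\mathcal{Q}_{\Omega_j}(\eta(U^m))/|\Omega_j|$ is $O(1)$, so their difference is $O(1)$. The numerical entropy flux $\Psi(U^{(i),-},U^{(i),+})$ is uniformly bounded on bounded data, hence $\tilde{K}_j^{(i)} = O(|\partial\Omega_j|/|\Omega_j|) = O(1/\Delta x)$, and under the CFL assumption $\Delta t=\lambda \Delta x$ the whole sum $\Delta t\sum_i b_i \tilde{K}_j^{(i)}$ is $O(1)$. Dividing the $O(1)$ numerator by $\Delta t$ yields $S_j^n \leq C/\Delta t$ with $C$ independent of the mesh.

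The main technical obstacle I expect lies in the smooth case: one has to track how the reconstruction error, which already contaminates the intermediate RK stages $\overline{U}_j^{(i)}$ and hence the flux functionals $\tilde{K}_j^{(i)}$, propagates through the RK sum without degrading the final rate. This requires carefully matching the orders of the reconstruction, of the spatial quadrature $\mathcal{Q}_{\Omega_j}$, of the RK time discretization and of the numerical entropy flux $\Psi$, so that none of them drops the rate below $\Delta t^p$. This is the standard, but somewhat tedious, local truncation error analysis of an RK finite volume scheme, specialised to the scalar quantity $\eta(U)$ via the compatibility condition $\nabla \eta^T J_f = \nabla \psi^T$.
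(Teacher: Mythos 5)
Your decomposition --- subtracting the exact entropy equality valid for smooth solutions and bounding separately the quadrature/cell-average errors, the consistency error of $\Psi$, and the time-discretization error, then invoking only boundedness of all terms for the non-smooth case --- is exactly the strategy the paper follows (it defers the proof of this particular theorem to the cited reference, but its own proof of the analogous ADER result, Theorem~\ref{thm:ADERent}, uses the same splitting into a ``time contribution'' and a ``flux contribution'' and the same $O(1)/\Delta t$ argument at shocks). The Runge--Kutta-stage issue you flag as the main obstacle is indeed the delicate point, and it is resolved by regarding $\eta$ as an additional conserved variable advanced by the same Runge--Kutta scheme (cf.\ the paper's first Remark), so that the full order $p$ of the method applies directly to the sum $\Delta t\sum_i b_i\tilde{K}_j^{(i)}$.
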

We point out that although Theorem~\ref{thm:FVRK} was proven in the one-dimensional setting, the numerical entropy production has been exploited also in multi-dimensional  AMR schemes in \cite{SCR:cwenoAMR,SL:18:AMRMOOD}.

\section{Numerical Entropy Production by FV-ADER schemes}

The aim of this section is to extend the definition of the numerical entropy production to the case of FV-ADER $P_0P_M$ schemes \cite{Dumbser:2008:PnPm}. 
The idea is similar to \cite{PS11:numerical:entropy}, 
but the last term of \eqref{sfv} which arose from the Runge-Kutta stages of the scheme for time advancement, will need to be replaced with a term linked to the FV-ADER evolution algorithm.

We start again from the conservation law \eqref{cl}. Integrating over $[t^n,t^{n+1}]\times\Omega_j$ and dividing by $|\Omega_j|$, we get
\begin{equation*}
	\overline{u}_j^{n+1} =\overline{u}_j^n-\dfrac{1}{|\Omega_j|}\int_{t^n}^{t^{n+1}}\left(\int_{\partial\Omega_j}f(u(\tau,x))\cdot\hat{n}\dx\right)d\tau.
\end{equation*}

Using an appropriate quadrature rule in time $\mathcal{Q}_t$, we obtain
\begin{equation*}
	\overline{u}_j^{n+1} \simeq \overline{u}_j^n-\dfrac{1}{|\Omega_j|}\mathcal{Q}_t\left(\int_{\partial\Omega_j}f(u(t,x))\cdot\hat{n}\dx \right).
\end{equation*}    

We approximate $\overline{u}_j^n$ with the numerical solution $\overline{U}_j^n$ and the exact flux $f(u(t,x))$ with a 2-point numerical flux $\mathcal{F}(u^-(t,x),u^+(t,x))$ consistent with $f(u)$. Here $u^{\pm}$ denote the inner and outer values of $u$ at the cell boundary and they are further approximated by the following local ADER predictor: in each cell $\Omega_j$ we look for a solution of the conservation law of the form 
\begin{equation}
	\hat{U}_j(\tau,{\xi})=\sum_{l}\hat{u}_{j,l}\theta_l(\tau,{\xi})
    \label{pred}
\end{equation}
where $\xi\in{\revDue \mathbb{R}^{\nu}}$ and $\{\theta_l(\tau,{\xi})\}$ are the basis functions for a space-time FEM method of order $M+1$.

The degrees of freedom $\{\hat{u}_{j,l}\}$ are computed in each cell $\Omega_j$ through the local continuous space-time Galerkin method presented in \cite{Dumbser:2008:PnPm}. 
Performing the change of variables with the local time $\tau=(t-t^n)/\Delta t$ and the coordinate transformation from $\Omega_j$ to the reference element $\Omega_E$, \eqref{cl} can be rewritten in terms of the reference coordinates $(\tau,\xi)$. 
Multiplying \eqref{cl} by the space-time test functions $\theta_k(\tau,\xi)$ and by $\Delta t$ and integrating in space and time over the  reference element $[0,1]\times\Omega_E$ we obtain:
\begin{equation*}
	\int_{\Omega_E}\int_0^1\theta_k(\tau,\xi)\partial_{\tau}u(\tau,\xi)d\tau d\xi+\sum_{{\revDue r}=1}^{\revDue \nu}{\Delta t}\int_{\Omega_E}\int_0^1\theta_k(\tau,\xi)\partial_{\xi_{\revDue r}}{f}_{\revDue r}^{*}(u(\tau,\xi))d\tau d\xi = 0
\end{equation*}
where ${f}_{\revDue r}^{*}(u(\tau,\xi))=\sum_{s=1}^{\revDue \nu}f_{s}(u(\tau,\xi))\partial_{x_{s}}\xi_{\revDue r}$.
If we approximate the solution $u$ and the flux $f(u)$ using \eqref{pred} and we integrate the first integral by parts with respect to the time variable, we obtain
\begin{equation*}
\begin{aligned}
	\sum_l\hat{u}_l\left[-\int_{\Omega_E}\int_0^1\partial_{\tau}\theta_k(\tau,\xi)\theta_l(\tau,\xi)d\tau d\xi +\int_{\Omega_E}\theta_k(1,\xi)\theta_l(1,\xi)d\xi-\int_{\Omega_E}\theta_k(0,\xi)\theta_l(0,\xi)d\xi\right] \\
	+ \sum_{{\revDue r}=1}^{\revDue \nu}{\Delta t}\sum_l\hat{f}_{l,{\revDue r}}^{*}\int_{\Omega_E}\int_0^1\theta_k(\tau,\xi)\partial_{\xi_{\revDue r}}\theta_l(\tau,\xi)d\tau d\xi =0
\end{aligned}
\end{equation*}
that can be written in the {\revDue matrix form}
\begin{equation*}
	-K^{\tau}\hat{u}+F^1\hat{u}-F^0\hat{u}+\sum_{{\revDue r}=1}^{\revDue \nu}{\Delta t}K^{\xi_{\revDue r}}\hat{f}^*=0
\end{equation*}
where $K^{\tau}$, $K^{\xi_{\revDue r}}$, $F^1$ and $F^0$ are respectively the matrices of the time derivative, the space derivative and the space integrals at $t=1$ and $t=0$.

Rearranging, we obtain an iterative method to find $\hat{u}$:
\begin{equation*}
	\hat{u}_{l}^{(k+1)}=(-K^{\tau}+F^1)^{-1}\left(F^0\hat{u}_{l}^{(0)}-\sum_{{\revDue r=1}}^{{\revDue \nu}}{\Delta t}K^{\xi_{\revDue r}}\hat{f}_{l,{\revDue r}}^{*,(k)}\right) \label{eq:iter}
\end{equation*}
where $\hat{f}_{l,{\revDue r}}^*=f_{\revDue r}^*(\hat{u}_{l})$. The degrees of freedom at the first step are initialized as the reconstructed values ${R}_j(t^n,x)$ at time $t^n$.

In \cite{Dumbser:2008:PnPm} it has been observed numerically that for nonlinear systems the method converges to a precision of $10^{-9}$ with at most $M$ or $M+1$ iterations, where $M+1$ is the order of convergence of the scheme. This is further confirmed by the analogy between ADER and DeC schemes studied in \cite{2024:ADERDeC}. 

The final numerical scheme becomes
\begin{equation}
    \overline{U}_j^{n+1} = \overline{U}_j^n-\dfrac{1}{|\Omega_j|}\mathcal{Q}_t\left(\int_{\partial\Omega_j}\mathcal{F}(\hat{U}^-(t,x),\hat{U}^+(t,x))\cdot\hat{n}\dx\right) \label{fv5}
\end{equation}
where $\hat{U}^{\pm}$ denote the inner and outer values of the predictor $\hat{U}$ at the cell boundaries and the integral on $\partial\Omega_j$ is computed using an appropriate quadrature rule in space. 

\begin{defn}
The numerical entropy production by a FV-ADER $P_0P_M$ scheme is obtained by the formula
\begin{equation}
	S_j^n=\dfrac{1}{\Delta t|\Omega_j|}\left\{\mathcal{Q}_{\Omega_j}({\eta}(U^{n+1}))-\mathcal{Q}_{\Omega_j}({\eta}(U^n))+\mathcal{Q}_t\left(\int_{\partial\Omega_j}\Psi(\hat{U}^-(t,x),\hat{U}^+(t,x))\cdot\hat{n}\dx\right)\right\}. \label{sader}
\end{equation}
\end{defn}

\begin{rem}
Again for a more efficient implementation, $S_j^n$ can be rewritten as
\begin{equation*}
    S_j^n = \dfrac{1}{\Delta t}\left(\dfrac{1}{|\Omega_j|}\mathcal{Q}_{\Omega_j}(\eta(U^{n+1}))-\overline{\eta}_j^{n+1}\right)
\end{equation*}
where $\overline{\eta}_j^{n+1}$ is the entropy evolved as a variable of the system and reinitialized at each time step as $\overline{\eta}_j^n=\eta(\overline{u}_j^n)$.
\end{rem}

Using the definition $\eqref{sader}$, the following proposition can be proved.

\begin{thm}\label{thm:ADERent}
Consider a $p^{th}$-order convergent scheme of the form $\eqref{fv5}$
where $\mathcal{Q}$ is a quadrature rule of order $q$ and define the entropy production as in $\eqref{sader}$ with a consistent entropy flux. Suppose $\Delta t=\mathcal{O}(|\Omega_j|/|\partial\Omega_j|)$. Then
\begin{itemize}
    \item if the solution is regular, the numerical entropy production in a fixed time step decays to zero as $\mathcal{O}(\Delta t^p)$ for $\Delta t\rightarrow 0$;
    \item in those cells in which the solution is not regular, $S_j^n\leq\dfrac{C}{\Delta t}$, for a constant $C$ which does not depend on $\Delta t$.
\end{itemize}
\end{thm}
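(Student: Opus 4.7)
The plan is to mirror the one-dimensional Runge--Kutta proof of Theorem~\ref{thm:FVRK}, adapted to the multi-dimensional ADER setting. For smooth solutions the pointwise entropy equality $\partial_t\eta + \nabla_x\cdot\psi = 0$ yields
\begin{equation*}
\int_{\Omega_j}\eta(u^{n+1})\dx - \int_{\Omega_j}\eta(u^n)\dx + \int_{t^n}^{t^{n+1}}\!\int_{\partial\Omega_j}\psi(u)\cdot\hat{n}\dx\dt = 0,
\end{equation*}
so $S_j^n$ is nothing but the discrepancy between the bracket in \eqref{sader} and this zero. For non-smooth solutions one instead estimates each term of \eqref{sader} using a priori $L^{\infty}$ bounds on $\eta(U)$ and on $\Psi$, and the claimed $1/\Delta t$ growth follows from the explicit prefactor.

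In the smooth case, I would decompose the discrepancy into three kinds of contributions: (i) the spatial quadrature errors $\mathcal{Q}_{\Omega_j}(\eta(U^{n+k})) - \int_{\Omega_j}\eta(U^{n+k})\dx$ for $k=0,1$, of order $\mathcal{O}(|\Omega_j|\,h^q)$ by the accuracy of $\mathcal{Q}_{\Omega_j}$ on a smooth integrand; (ii) the approximation errors $\int_{\Omega_j}\bigl(\eta(U^{n+k})-\eta(u^{n+k})\bigr)\dx$, controlled by Lipschitz continuity of $\eta$ on a bounded physical set together with the $p$-th order pointwise accuracy of the reconstruction $U^{n+k}$; (iii) the flux error
\begin{equation*}
\mathcal{Q}_t\!\left(\int_{\partial\Omega_j}\Psi(\hat{U}^-,\hat{U}^+)\cdot\hat{n}\dx\right) - \int_{t^n}^{t^{n+1}}\!\int_{\partial\Omega_j}\psi(u)\cdot\hat{n}\dx\dt,
\end{equation*}
which splits further into the error of the time/boundary quadrature and the consistency error $\Psi(\hat{U}^-,\hat{U}^+)-\psi(u)\cdot\hat{n}$. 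The latter I would control by Lipschitz continuity of $\Psi$ in both arguments together with the $\mathcal{O}(\Delta t^p)$ pointwise accuracy of the ADER predictor $\hat{U}$ established in \cite{Dumbser:2008:PnPm}. Collecting the three contributions yields a bracket of size $\mathcal{O}(|\Omega_j|\Delta t^{p+1}) + \mathcal{O}(\Delta t\,|\partial\Omega_j|\,\Delta t^{p})$; the hypothesis $\Delta t = \mathcal{O}(|\Omega_j|/|\partial\Omega_j|)$ absorbs the second term into the first, and after dividing by $\Delta t\,|\Omega_j|$ one obtains $S_j^n = \mathcal{O}(\Delta t^p)$.

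In the non-smooth case, provided the computed states $U^n,U^{n+1},\hat{U}^{\pm}$ stay in a bounded physical region, one has $|\mathcal{Q}_{\Omega_j}(\eta(U^{n+k}))| \leq C_\eta |\Omega_j|$ and $|\mathcal{Q}_t(\int_{\partial\Omega_j}\Psi\cdot\hat{n}\dx)| \leq C_\Psi\,\Delta t\,|\partial\Omega_j|$. The bracket in \eqref{sader} is thus bounded by $2 C_\eta|\Omega_j| + C_\Psi\Delta t\,|\partial\Omega_j|$, which under the same geometric hypothesis is $\mathcal{O}(|\Omega_j|)$; dividing by $\Delta t\,|\Omega_j|$ gives $S_j^n \leq C/\Delta t$ with $C$ independent of $\Delta t$.

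The main obstacle will be achieving the sharp rate $\Delta t^p$ rather than a loose $\Delta t^{p-1}$ in the smooth case: a crude estimate of the boundary flux error carries a $|\partial\Omega_j|$ prefactor that naively costs one power of $\Delta t$. The assumption $\Delta t = \mathcal{O}(|\Omega_j|/|\partial\Omega_j|)$ is precisely what re-absorbs this surface-to-volume factor on arbitrary polygonal meshes, and to close the argument one really needs the full $p$-th order pointwise accuracy of the ADER predictor $\hat{U}$ (not just a bound on the jump $\hat{U}^+-\hat{U}^-$), so that consistency of $\Psi$ with $\psi$ yields a flux discrepancy of the correct order in $\Delta t$.
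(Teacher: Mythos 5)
Your overall strategy --- invoking the exact entropy identity $\partial_t\eta+\nabla_x\cdot\psi=0$ for smooth data, decomposing the bracket of \eqref{sader} into spatial quadrature errors, reconstruction/approximation errors and flux-consistency errors, and using plain boundedness for the non-smooth case --- is the same as the paper's, and your argument for the second bullet is correct. In the smooth case, however, your bookkeeping is one power of $\Delta t$ short, and the bracket size $\mathcal{O}(|\Omega_j|\Delta t^{p+1})$ you announce does not follow from the estimates you actually give. You bound the spatial quadrature errors and the approximation errors \emph{separately} at $t^n$ and at $t^{n+1}$: each quadrature error is $\mathcal{O}(|\Omega_j|\Delta x^{q})$ and each approximation error is $\mathcal{O}(|\Omega_j|\Delta x^{r+1})$, which under the natural hypotheses $q\geq p$, $r+1\geq p$ of a $p^{th}$-order scheme is only $\mathcal{O}(|\Omega_j|\Delta t^{p})$ --- after division by $\Delta t|\Omega_j|$ this yields $\mathcal{O}(\Delta t^{p-1})$, not $\mathcal{O}(\Delta t^{p})$. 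The paper's proof gains the missing factor by \emph{pairing} the two time levels: the quadrature error is applied to the difference $\eta(u^{n+1})-\eta(u^n)=\int_{t^n}^{t^{n+1}}\partial_t\eta(u)\dt=\mathcal{O}(\Delta t)$, and the reconstruction errors enter multiplied by $D\eta^{n+1}-D\eta^{n}=\mathcal{O}(\Delta t)$; it is this cancellation between consecutive time levels, absent from your decomposition, that produces the extra $\Delta t$.

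A second, related gap concerns the boundary term. Your own tally gives a contribution $\mathcal{O}(\Delta t\,|\partial\Omega_j|\,\Delta t^{p})$, and you claim the hypothesis $\Delta t=\mathcal{O}(|\Omega_j|/|\partial\Omega_j|)$ ``absorbs'' it into $\mathcal{O}(|\Omega_j|\Delta t^{p+1})$; in fact it only yields $\mathcal{O}(|\Omega_j|\Delta t^{p})$ --- the geometric hypothesis trades the surface factor for a volume factor but cannot gain a power of $\Delta t$. In the paper the boundary term reaches order $\Delta t\,|\partial\Omega_j|\,\mathcal{O}(\Delta x^{q+1}+\Delta t^{p+1})$ through two ingredients you do not use: the predictor/consistency error of $\Psi$ is $\mathcal{O}(\Delta t^{p+1})$ (one order above the scheme order), and the leading face-quadrature error terms, once their evaluation points are moved to the cell barycenter at the cost of $\mathcal{O}(\Delta x^{q+1})$, cancel across the faces because $\sum_{e\in\partial\Omega_j}\hat{n}_e=0$, upgrading the naive per-face error $\mathcal{O}(\Delta x^{q})$ to $\mathcal{O}(\Delta x^{q+1})$. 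You correctly identify the $\Delta t^{p}$ versus $\Delta t^{p-1}$ issue as the crux in your closing remark, but you attribute its resolution to the mesh-ratio assumption alone, which is not where the sharp rate comes from.
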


\begin{proof}
Let $u(t,x)$ be the exact solution and $U(t,x)$ the numerical solution of the conservation law with $t\in[t^n,t^{n+1}]$ and $x=\left(x^{(1)},...,x^{({\revDue \nu})}\right)\in{\revDue \mathbb{R}^{\nu}}$ and suppose that the cell averages of the exact and the numerical solution coincide at the beginning of the timestep:
\begin{equation*}
    \overline{U}_j(t^n)=\overline{u}_j(t^n).
\end{equation*}
Let $\Delta x$ be a scalar quantity related to the diameter of the cell, for example the quantity $|\Omega_j|/|\partial\Omega_j|$.

Firstly, estimate the time contribution in $S_j^n$, namely
$\mathcal{Q}_{\Omega_j}(\eta(U^{n+1}))-\mathcal{Q}_{\Omega_j}(\eta(U^n))$.
We use a reconstruction $\mathcal{R}$ of accuracy $r$ to compute the values of $U^n$ on the quadrature nodes. For smooth solutions it holds
\begin{equation*}
    U^n(x_{\cdot,j}) =  \mathcal{R}(\overline{U}^n;x_{\cdot,j}) = \mathcal{R}(\overline{u}^n,x_{\cdot,j}) = u^n(x_{\cdot,j})+\mathcal{O}(\Delta x^{r+1}). \label{fv6}
\end{equation*}
Applying the Lagrange's mean value theorem
\begin{equation*}
\begin{aligned}
    \eta(U^n(x_{\cdot,j}))-\eta(u^n(x_{\cdot,j}))&=\nabla_u\eta^n \cdot (U^n(x_{\cdot,j})-u^n(x_{\cdot,j}))=\sum_{k=1}^m(U^n_k-u^n_k)\partial_{u_k}\eta^n \\
    &=\sum_{k=1}^m\mathcal{O}(\Delta x^{r+1})\partial_{u_k}\eta^n=D\eta^n\mathcal{O}(\Delta x^{r+1})
\end{aligned}
\end{equation*} 
where $D\eta^n$ is the sum of the first derivatives of $\eta$  with respect to $u$ evaluated at $t^n$.
Using a quadrature rule of order $q$ with nodes $x_{k,j}\in{\revDue \mathbb{R}^{\nu}}$ and weights $w_k$ for $k=1,...,K$, the second term becomes
\begin{equation*}
    \mathcal{Q}_{\Omega_j}(\eta(U^n)=
    |\Omega_j|\sum_{k=1}^Kw_k\eta(U^n(x_{k,j})=
    \mathcal{Q}_{\Omega_j}(\eta(u^n))+\mathcal{Q}_{\Omega_j}\left(D\eta^n\mathcal{O}(\Delta x^{r+1})\right)
\end{equation*}
The numerical solution at time $t^{n+1}$ is computed through the ADER predictor $\eqref{pred}$.  
Using a set of basis functions of degree $M=p-1$ we get
\begin{equation*}
    \hat{U}(t,x)=\tilde{u}(t,x)+\mathcal{O}(\Delta t^{p+1}) 
\end{equation*}
where $\tilde{u}(t,x)$ is the exact solution computed with initial data $\mathcal{R}(\overline{u}^n,x)$ obtained from the reconstruction and if $u(t,x)$ is smooth 
\begin{equation*}
   U^{n+1}(x_{\cdot,j}) =u^{n+1}(x_{\cdot,j})+\mathcal{O}(\Delta x^{r+1})+\mathcal{O}(\Delta t^{p+1})  
\end{equation*}
and we get
\begin{equation*}
    \mathcal{Q}_{\Omega_j}(\eta(U^{n+1}))=
    \mathcal{Q}_{\Omega_j}(\eta(u^{n+1}))
    +\mathcal{Q}_{\Omega_j}\left(D\eta^{n+1}\big(\mathcal{O}(\Delta x^{r+1})+\mathcal{O}(\Delta t^{p+1})\big)\right).
\end{equation*}
Subtracting the two terms, the time contribution becomes
\begin{equation*}
\begin{aligned}
    \mathcal{Q}_{\Omega_j}(\eta(U^{n+1}))-\mathcal{Q}_{\Omega_j}(\eta(U^n))&=
    \mathcal{Q}_{\Omega_j}(\eta(u^{n+1})-\eta(u^n))
    \\
    &\phantom{=\,}+\mathcal{Q}_{\Omega_j}\left((D\eta^{n+1}-D\eta^n)\mathcal{O}(\Delta x^{r+1})
     +\mathcal{O}(\Delta t^{p+1})\right)\\
    &=\mathcal{Q}_{\Omega_j}\left(\int_{t^n}^{t^{n+1}}\partial_t\eta(u)\dt\right)+|\Omega_j|\mathcal{O}(\Delta t\Delta x^{r+1}+\Delta t^{p+1}),
\end{aligned}
\end{equation*}
where we have used the fact that, since the exact solution is smooth,
\begin{equation*}
    D\eta^{n+1}-D\eta^n=\mathcal{O}(\Delta t).
\end{equation*}
Finally, using the approximation of the quadrature rule $\mathcal{Q}_{\Omega_j}$, we get

\begin{equation*}
\begin{aligned}
    \mathcal{Q}_{\Omega_j}&(\eta(U^{n+1}))-\mathcal{Q}_{\Omega_j}(\eta(U^n))=
    \int_{\Omega_j}\int_{t^n}^{t^{n+1}}\partial_t\eta(u)\dt dx\\
    &+C_q|\Omega_j|\Delta x^qD^q\left(\int_{t^n}^{t^{n+1}}\partial_t\eta(u)\dt\right)+|\Omega_j|\mathcal{O}(\Delta x^{q+1}+\Delta t\Delta x^{r+1}+\Delta t^{p+1})\\
    &=\int_{\Omega_j}\int_{t^n}^{t^{n+1}}\partial_t\eta(u)\dt dx+|\Omega_j|\Delta t\mathcal{O}(\Delta x^q+\Delta x^{r+1}+\Delta t^p),
\end{aligned}
\end{equation*}
where $D^q$ denotes the spatial derivatives of order $q$.

Next, we estimate the space contribution $\mathcal{Q}_t\left(\mathcal{Q}_{\partial\Omega_j}\left(\Psi(\hat{U}^-,\hat{U}^+)\cdot\hat{n}\right)\right)$.
The quadrature over $\partial\Omega_j$ can be written as the sum of the integrals over the faces of $\Omega_j$, and using the Lipschitz-continuity and the consistency of the numerical entropy flux we obtain 
\begin{equation*}
\begin{aligned}
    \mathcal{Q}_{\partial\Omega_j}\left(\Psi(\hat{U}^-,\hat{U}^+)\cdot\hat{n}\right)&=
    \sum_{e\in\partial\Omega_j}\mathcal{Q}_{e}\left(\Psi(\hat{U}^-,\hat{U}^+)\cdot\hat{n}_e\right) \\
    =&\sum_{e\in\partial\Omega_j}\mathcal{Q}_{e}\left(\psi(u)\cdot\hat{n}_e\right)+|\partial\Omega_j|\mathcal{O}(\Delta t^{p+1}).
\end{aligned}
\end{equation*}
Now, using approximation of the quadrature rule, we get for each face
\begin{equation*}
   \begin{aligned}
    \mathcal{Q}_{e}\left(\psi(u)\cdot\hat{n}_e\right)&=
    \int_{e}\psi(u)\cdot\hat{n}_e
    +C_q|e|\Delta x^q D^{q}\psi(\xi_e)\hat{n}_e 
    +\mathcal{O}(|e|\Delta x^{q+1})\\
    &=\int_{e}\psi(u)\cdot\hat{n}_e
    +C_q\Delta x^{(q+{\revDue \nu}-1)} D^{q}\psi(\xi_e)\hat{n}_e
    +\mathcal{O}(|e|\Delta x^{q+1})\\
    &=\int_{e}\psi(u)\cdot\hat{n}_e
    +C_q\Delta x^{(q+{\revDue \nu}-1)} D^{q}\psi(b_j)\hat{n}_e
    +\mathcal{O}(|e|\Delta x^{q+1}).
\end{aligned} 
\end{equation*}
In the last line, we have used the fact that the evaluation point $\xi_e$ on the face is $\Delta x$ apart from the barycenter $b_j$ of the cell. Summing up the contributions from all faces of $\Omega_j$ and using that $\sum_{e\in\partial\Omega_j}\hat{n}_e=0$, we get
\begin{equation*}
\mathcal{Q}_{\partial\Omega_j}\left(\Psi(\hat{U}^-,\hat{U}^+)\cdot\hat{n}\right)
=\int_{\Omega_j}\nabla_x\cdot\psi(u)\dx+|\partial\Omega_j|\mathcal{O}(\Delta x^{q+1}+\Delta t^{p+1}).
\end{equation*}
Applying the quadrature rule in time, we obtain
\begin{equation*}
   \begin{aligned}
    \mathcal{Q}_t\left(\mathcal{Q}_{\partial\Omega_j}\left(\Psi(\hat{U}^-,\hat{U}^+)\cdot\hat{n}\right)\right)
    &=\int_{t^n}^{t^{n+1}}\!\!\!\int_{\Omega_j}\nabla_x\cdot\psi(u)\dx dt+\mathcal{O}(\Delta t^{q+2}) \\
    & \phantom{=\,} +\Delta t|\partial\Omega_j|\mathcal{O}(\Delta x^{q+1}+\Delta t^{p+1}).
    \end{aligned}
\end{equation*}

Finally, summing up the time and the spatial part we get
\begin{equation*}
\begin{aligned}
    S_j^n&=\frac{1}{\Delta t|\Omega_j|}\int_{t^n}^{t^{n+1}}\!\!\!\int_{\Omega_j}\partial_t\eta(u)\dx dt+\mathcal{O}(\Delta x^q+\Delta x^{r+1}+\Delta t^p)\\
    &+\dfrac{1}{\Delta t|\Omega_j|}\int_{t^n}^{t^{n+1}}\!\!\!\int_{\Omega_j}\nabla_x\cdot\psi(u)\dx dt+\dfrac{\Delta t|\partial\Omega_j|}{|\Omega_j|}\mathcal{O}(\Delta x^{q}+\Delta t^{p}).
\end{aligned}
\end{equation*}
Therefore, choosing $r+1\geq p$ and $q\geq p$,
$S_j^n=\mathcal{O}(\Delta t^p)$ for smooth solutions.

For the second statement, it is enough to note that every term in $\eqref{sader}$ is bounded. Therefore $|S_j^n|\leq C/\Delta t$, where $C$ does not depend on $\Delta t$.
\end{proof}

The proof of the theorem considers only the extremal cases of a smooth data or a generic input, which is intended to represent the worst case scenario of a shock. We now compute in more detail the values assumed by the numerical entropy production in few cases, to illustrate better the results of the theorem and to explore intermediate flow regularity. 

We perform the computations in the case of a second order $P_0P_1$ scheme.
In particular we look for a predictor in the Galerkin space $\mathbb{P}_1(\tau)\otimes\mathbb{P}_1(\xi)$, and then compute the fluxes at cell boundaries by integrating in time using the trapezoidal rule. We choose {\revDue $\Delta t$} so that the relevant wave in each case crosses half of a cell.
The scheme is also simplified, assuming an exact reconstruction, i.e. that the boundary value reconstructions coincide with the initial data evaluated at the cell boundary. Unless in some cases (specified below) where the predictor's solution can be computed analytically, the predictor is computed by applying two iterations of \eqref{eq:iter}.

\paragraph{Example: smooth data}
Let us consider a smooth initial data, whose Taylor expansion around a cell center $x_j$ is $u(x)=u_j+u_x(x-x_j)+\tfrac12 u_{xx}(x-x_j)^2+\mathcal{O}(x-x_j)^3$. Applying one step of the second order numerical scheme, one obtains
\[
S_j = \frac{u_x u_{xx}}{2} (\Delta x)^2 + \mathcal{O}(\Delta x)^3
\]
in the case of the linear transport equation $\partial_tu+\partial_xu=0$ and
\[
S_j = \left(\frac{13}{24}  u_x^2 + \frac{1}{2}u_ju_xu_{xx}\right)(\Delta x)^2 + \mathcal{O}(\Delta x)^3
\]
for Burgers'.
As expected, in both cases we have obtained $S=\mathcal{O}(\Delta x^2)$, corresponding to the behaviour of the second order numerical scheme on a smooth flow. We also observe that, both for linear transport and Burgers',  the leading $\mathcal{O}(\Delta x^2)$ term vanishes at local extrema; we point out that the case of local extrema is a point of difficulty when studying the sign of the numerical entropy production also in simpler cases, see e.g. \cite{PS11:numerical:entropy}.

\paragraph{Example: shock wave}
Next let us consider a discontinuous initial data 
\[
    u_0(x)=
    \begin{cases}
    u_L & \mbox{for } x<-\tfrac{\Delta x}{2}\\
    u_R & \mbox{for } x>-\tfrac{\Delta x}{2}
    \end{cases}
\]
with $u_L>u_R=0$ and define the jump $\delta = u_L-u_R>0$ across the shock. 
Here computations are made easier since one knows that the predictor in each cell will be constant in both space and time and one has only to apply the numerical flux function and the trapezoidal quadrature rule.
For Burgers' one obtains
\[
    S = \frac{1}{\Delta x}\left(-\frac{5}{6}\delta^3+\frac{9}{16}\frac{\Delta t}{\Delta x}\delta^4\right) = \frac{C}{\Delta x}
\]
with $C\sim\delta^3$, when choosing $\frac{\Delta t}{\Delta x}=\frac{1}{u_L}=\frac{1}{\delta}$ so that the shock, which moves at speed $(u_L+u_R)/2$, travels for half a cell in a timestep. This corresponds to the results on the exact entropy production at a shock in the Burger's equation \cite{LeVeque:book}.

\paragraph{Example: kink and corner of rarefaction wave}
Finally, as an illustration of the behaviour of $S$ in the case of an intermediate regularity situation, let us consider an initial data with a kink, i.e. continuous but not differentiable. Consider 
\[
    u_0(x)=
    \begin{cases}
    \tilde{u} & \mbox{for } x<-\tfrac{\Delta x}{2}\\
    \tilde{u}+u_x(x+\tfrac{\Delta x}{2}) & \mbox{for } x\geq-\tfrac{\Delta x}{2}
    \end{cases}
\]
We study the numerical entropy production in the first timestep in the cell $[-\tfrac{\Delta x}{2},\tfrac{\Delta x}{2}]$.
The predictor on the cell $[-\tfrac{3\Delta x}{2},-\tfrac{\Delta x}{2}]$ is constant in space and time.
For the linear transport (with a positive velocity) of this data, 
the predictor for the cells 
$[-\tfrac{\Delta x}{2},\tfrac{\Delta x}{2}]$
and
$[\tfrac{\Delta x}{2},\tfrac{3\Delta x}{2}]$
coincides with exact transport of the linear initial data. 
One gets
\[
    S=\frac{29}{32}u_x^2\text{ }\Delta x+\mathcal{O}(\Delta x^2)
\]
In the case of Bugers', when $\hat{u}$ and $u_x$ are non-negative, it gives rise to a right-moving rarefaction wave. The numerical entropy production is
\[
    S=\frac{29}{32}\tilde{u}u_x^2\text{ }\Delta x + \mathcal{O}(\Delta x^2)
\]
In both cases the numerical entropy production is $S=\mathcal{O}(\Delta x)$, and this corresponds to the scheme having an order of accuracy degraded to 1 on this cell. Of course, in the special case $u_x=0$, no discontinuity is present and one recovers the case of a smooth solution.

\section{Numerical tests on properties of the entropy production}

In this section, we show numerical evidence of the properties of the numerical entropy production by FV-ADER schemes.
We consider a uniform Cartesian mesh. 
We consider in particular second order $P_0P_1$ schemes and third order $P_0P_2$ schemes, which compute the cell averages at time $t^{n+1}$ by using a local $\mathbb{P}_1\otimes\mathbb{P}_1$ and $\mathbb{P}_2\otimes\mathbb{P}_2$ space-time continuous Galerkin predictor. The time integrals of the numerical fluxes for the evolution are computed with the trapezoidal and the Cavalieri-Simpson rule, respectively.

\subsection{One-dimensional case}

We consider Euler equations of gasdynamics. The unknowns are $u = [\rho, \rho v, E]$ where $\rho$ is the density, $v$ the velocity and $E = \tfrac{1}{2}\rho v^2 + p/(\gamma-1)$ is the total energy. We consider $\gamma = 1.4$. The flux function is $f(u) = [\rho v, \rho v^2+p, v(E+p)]$. We take as entropy pair the entropy function $\eta(u) = -\rho \log(\tfrac{p}{(\gamma-1)\rho^{\gamma}}$) and the entropy flux $\psi(u) = v\eta(u)$.

The fully-discrete $P_0P_M$ scheme is implemented with $M = 1$ and $M = 2$, reaching respectively a second and a third order of accuracy. We choose Rusanov flux and a piecewise linear polynomial with minmod slopes \cite{Sweby:84} for the second order reconstruction and a CWENOZ reconstruction \cite{CSV19:cwenoz} of order 3 for the third order scheme. We fix the CFL number at 0.5.

\paragraph{Convergence test}
We take as domain the interval $[0, 1]$ and as initial data the smooth function
\begin{equation*}
	u_0(x)=
	\begin{cases}
		\rho(x)=1+0.5\sin(2\pi x)\\
		v(x)=1\\
		p(x)=1
	\end{cases}
\end{equation*}
with periodic boundary conditions and final time $t_{Max} = 0.1$.

Table~\ref{tab:convergence} shows the rate of convergence of the $P_0P_1$ and $P_0P_2$ schemes compared with the rate of convergence to zero of $S_j^n$ in norm 1, under grid refinement. Notice that refining the grid, $S_j^n$ scales as the order of the corresponding numerical scheme. The results observed are consistent with the theory.
Fig.~\ref{fig1} shows the plot of the density compared to the corresponding entropy production by a $P_0P_2$ scheme with $N=128,256,512$. Notice in particular the negative sign of $S_j^n$.

\begin{table}
\begin{center}
\pgfplotstabletypeset[
		col sep=comma,
		sci zerofill,
		empty cells with={--},
		every head row/.style={before row=\toprule,after row=\midrule},
            every first column/.style={column type/.add={}{|}},
            every fifth column/.style={column type/.add={|}{}},
		every last row/.style={after row=\bottomrule},
		create on use/rate/.style={create col/dyadic refinement rate={1}},
		columns/0/.style={column name={N}},
            columns/1/.style={column name={$P_0P_1$}},
            columns/2/.style={column name={Rate},fixed zerofill},
            columns/3/.style={column name={S}},
            columns/4/.style={column name={Rate},fixed zerofill},
            columns/5/.style={column name={$P_0P_2$}},
            columns/6/.style={column name={Rate},fixed zerofill},
            columns/7/.style={column name={S}},
            columns/8/.style={column name={Rate},fixed zerofill},
		columns={0,1,2,3,4,5,6,7,8},
		]
		{errori1.txt}
\end{center}
\caption{Rate of convergence of the $P_0P_1$ and $P_0P_2$ schemes compared with the decay to zero of $S^n$ at each time-step.}
\label{tab:convergence}
\end{table}

\begin{figure}[h]
	\centering
        \includegraphics[width=0.9\textwidth]{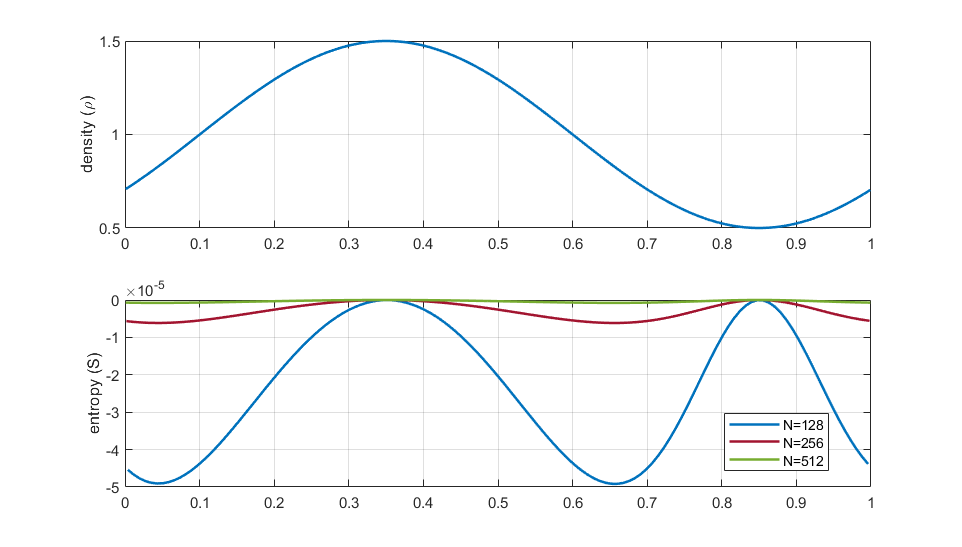}
\caption{Smooth solution using $P_0P_2$ and corresponding numerical entropy production with a grid of $N=128,256$ and $512$ cells.}
\label{fig1}
\end{figure}

\vspace{0.5cm}
Next, we analyze the behaviour of the numerical entropy production by the $P_0P_1$ and $P_0P_2$ schemes applied to initial data which present a discontinuity in $x_0$ and which give rise to contact discontinuities, shock or rarefaction waves.

\paragraph{Rarefaction waves}
We take as domain the interval $[-2,2]$ and as initial data
\begin{equation*}
	u_0(x)=
	\begin{cases}
		u_L \text{ if }x\leq0\\
		u_R \text{ if }x>0
	\end{cases}
\end{equation*}
where the conservative variables $u_L, u_R$ represent the states $(\rho_L,v_L,p_L)=(1,-0.15,1)$, $(\rho_R,v_R,p_R)=(0.5,0.15,1)$
and final time $t_{Max}  = 0.5$. This initial data gives rise to two rarefaction waves that move in opposite directions. 

\begin{figure}
	\centering
	\includegraphics[width=0.48\textwidth]{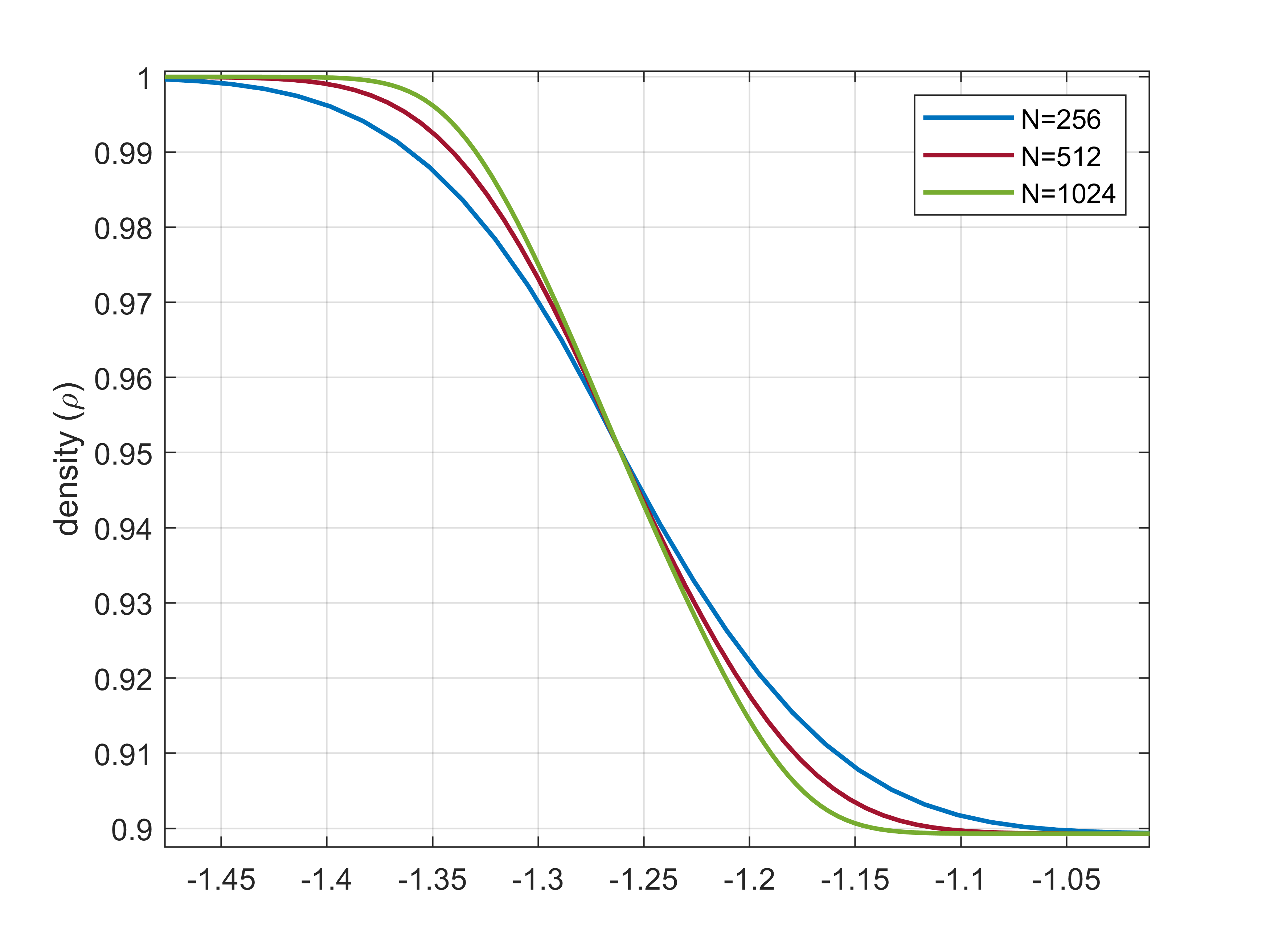}
    \hfill
    \includegraphics[width=0.46\textwidth]{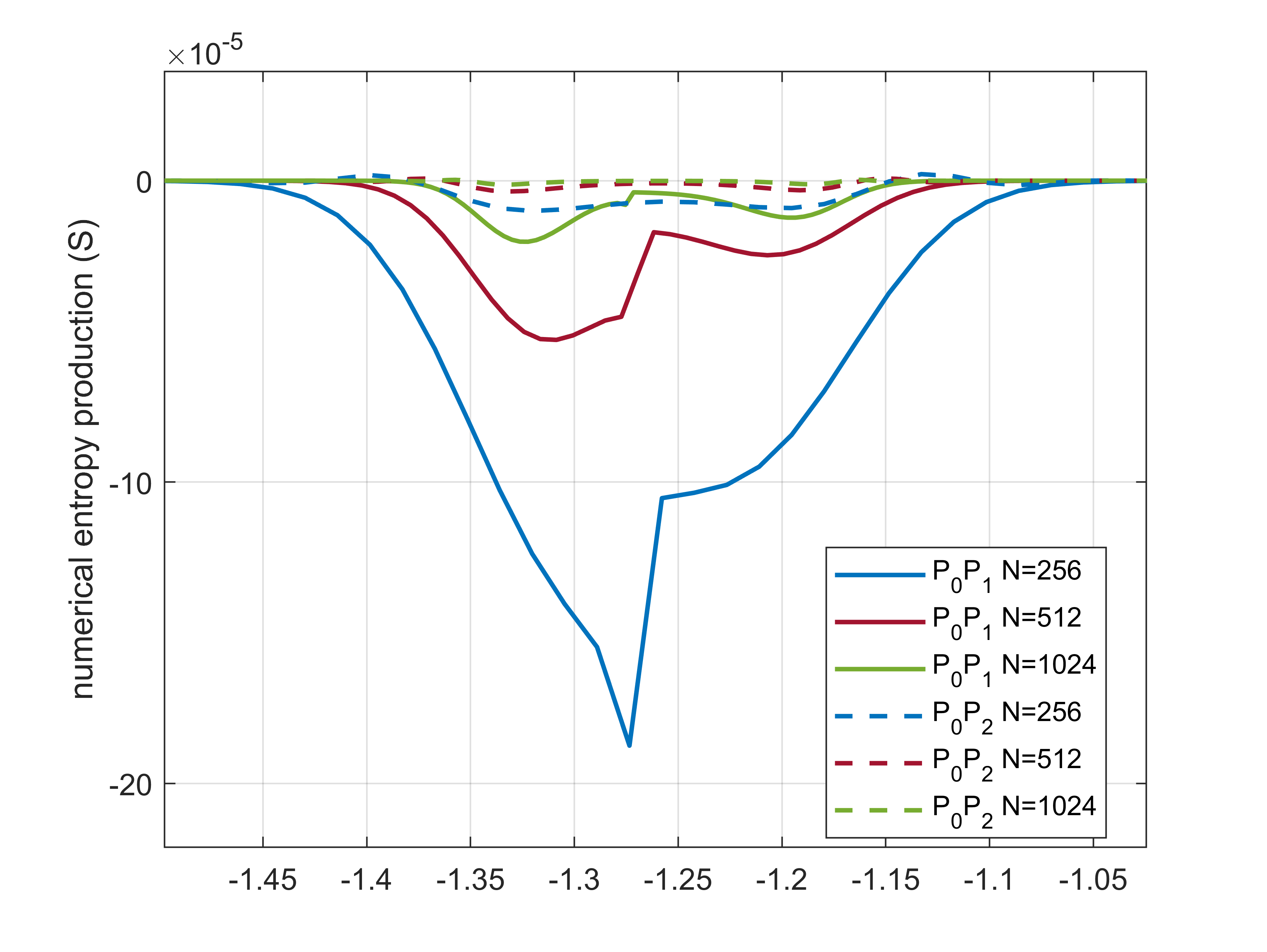}
\caption{Left rarefaction wave computed by $P_0P_1$ (left) and entropy production by $P_0P_1$ and $P_0P_2$ (right) with a grid of $N=256,512$ and 1024 cells.}
\label{fig2}
\end{figure}

In Fig.~\ref{fig2} the numerical solution of the density (left panel) and the corresponding numerical entropy production (right panel) for the left rarefaction wave are compared. Under grid refinement, the scheme improves the resolution of the density and $S_j^n$ has a small essentially negative value that converges to zero. We observe that the absolute values of the numerical entropy production are smaller for the third order scheme and that these latter also show some very small positive overshoots.

\paragraph{Contact discontinuities}
Then we consider as initial data
the Riemann problem with left and right states determined by $(\rho_L,v_L,p_L)=(2,0.1,1)$ and $(\rho_L,v_L,p_L)=(1,0.1,1)$,
as domain the interval $[-5,5]$ and as final time $t_{Max}  = 10$. This initial data gives rise to a contact discontinuity that moves slowly starting from $x_0 = 0$. 

\begin{figure}
	\centering
        \includegraphics[width=0.48\textwidth]{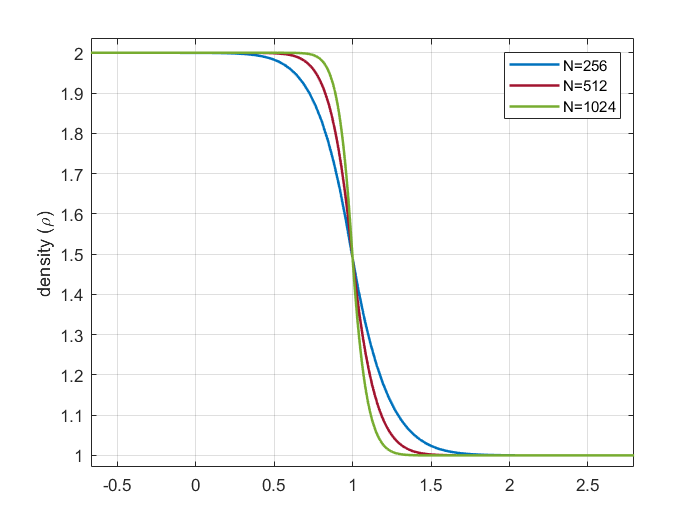} 
        \hfill
        \includegraphics[width=0.46\textwidth]{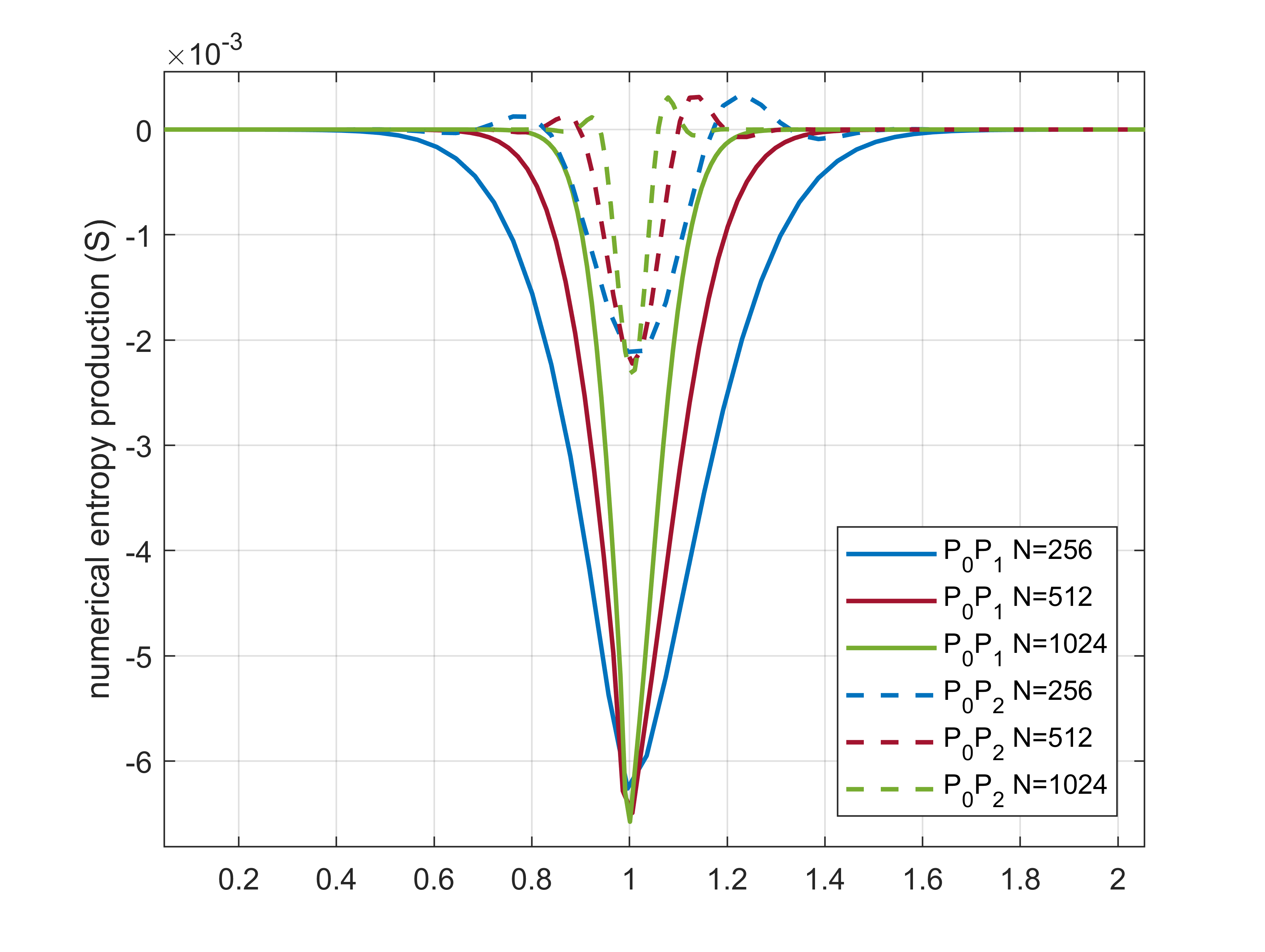}
\caption{Contact discontinuity wave computed by $P_0P_1$ (left) and entropy production of $P_0P_1$ and $P_0P_2$ (right) with a grid of $N=256,512$ and 1024 cells.}
\label{fig3}
\end{figure}

In Fig.~\ref{fig3} we show the numerical solution of the density (left panel) and the corresponding numerical entropy production (right panel). For each scheme, $S_j^n$ remains approximately constant across the discontinuity (at $x=1$) under grid refinement. Notice that the order of magnitude is increased with respect to the rarefaction wave. We observe that also in this case the absolute value of the numerical entropy production is smaller for the third order scheme and that there are some positive overshoots near the discontinuity.

\paragraph{Shock waves}
Next we compute the Riemann problem
with left and right states determined by 
$(\rho_L,v_L,p_L)=(1.5,4,10)$ and $(\rho_R,v_R,p_R)=(0.5,-4,10)$, 
with domain $[-3,7]$ and final time $t_{Max}  = 1$. This Riemann problem produces two shock waves that propagate in opposite directions. Consider the right shock wave and analyze the behaviour of $S_j^n$. 
 
\begin{figure}
	\centering
        \includegraphics[width=0.48\textwidth]{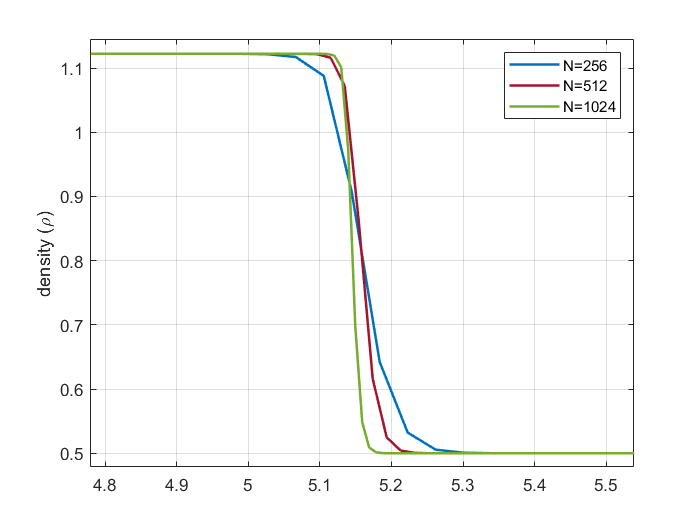}
        \hfill
        \includegraphics[width=0.46\textwidth]{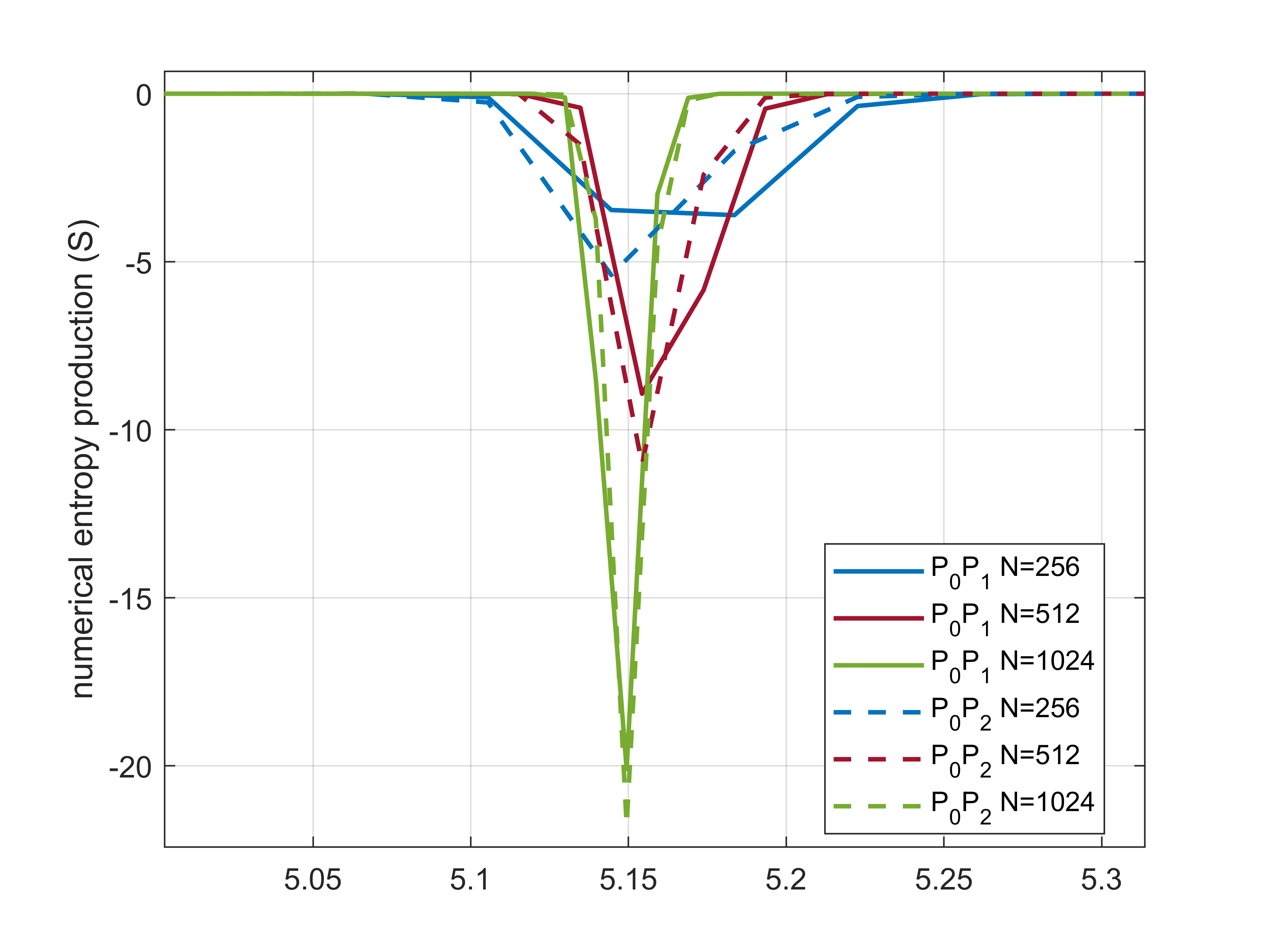}
\caption{Right shock wave computed by $P_0P_1$ (left) and entropy production of $P_0P_1$ and $P_0P_2$ (right) with a grid of $N=256,512$ and 1024 cells.}
\label{fig4}
\end{figure}

In Fig.~\ref{fig4} we show the numerical solution of the density (left panel) and the corresponding numerical entropy production (right panel). $S_j^n$ increases as $1/\Delta x$ under grid refinement and, since $\Delta t=\mathcal{O}(\Delta x)$, as $1/\Delta t$. Notice that the magnitude of the numerical entropy is significantly higher than the previous cases. In the case of shocks, there is no significant difference in magnitude for the absolute value of the numerical entropy production between second and third order schemes. This is consistent with the fact that in this case the entropy production is non-zero also on the exact solution, whereas in all other cases the entropy production is zero on the exact solution and the numerical entropy production is thus just a truncation error.

\paragraph{Sod problem}
Finally, we compare $S_j^n$ on the Sod tube problem in order to get a general overview on the different kind of discontinuities. We take as domain the interval $[0,1]$ and as initial data
\begin{equation*}
	u_0(x)=
	\begin{cases}
		u_L \text{ if }x\leq0.5\\
		u_R \text{ if }x>0.5
	\end{cases}
\end{equation*}
where $(\rho_L,v_L,p_L)=(1,0,1)$, $(\rho_L,v_L,p_L)=(0.125,0,0.1)$
and final time $t_{Max} = 0.2$. 

\begin{figure}
	\centering
	\includegraphics[width=0.9\textwidth]{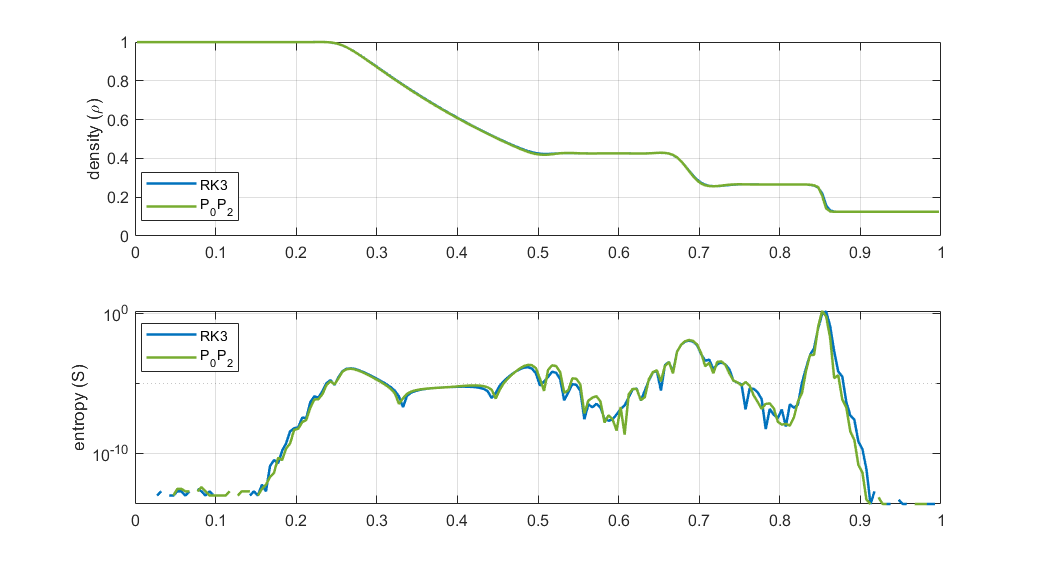}
\caption{Sod tube problem and corresponding entropy production in logarithmic scale by 3-Runge-Kutta FV and $P_0P_2$ with a grid of 100 cells.}
\label{fig5}
\end{figure}

Fig.~\ref{fig5} we plot the numerical solution of the density ( top panel) and the corresponding numerical entropy production (bottom panel) using a Runge Kutta method of order 3 and a $P_0P_2$ scheme with $N=100$. We observe that the  numerical entropy is almost zero in smooth regions, with a slightly bigger production in correspondence of the corners of the rarefaction wave (see also the example after Theorem~\ref{thm:ADERent}) and with a more pronounced signal at the contact discontinuity. The highest numerical entropy production is localized on the shock wave.
The plot also shows that the numerical entropy production of both the Runge-Kutta-FV and the FV-ADER scheme is comparable in all regions of the solution.

\subsection{Two-dimensional case}
In this section we present some numerical tests in the two-dimensional case. The numerical schemes are run using PETSc libraries for grid management and parallel computing \cite{petsc-user-ref}. 
We use a $P_0P_2$ scheme, with CWENO reconstruction \cite{CPSV:cweno} and the Rusanov approximate Riemann solver.  We fix the CFL number at 0.45.

We consider 2D Euler equations, with unknown $\vec{u}=[\rho,\rho u,\rho v, E]$ and fluxes $f(\vec{u}) =[\rho u,\rho u^2+p,\rho u v,u(E+p)] $ and $g(\vec{u})=[\rho v, \rho u v, \rho v^2+p,v(E+p)]$. The entropy pair is taken as $\eta(\vec{u})=-\rho\log\left(\tfrac{p}{(\gamma-1)\rho^\gamma}\right)$ and $\psi(\vec{u})=[u\eta(\vec{u}),v\eta(\vec{u})]$.

\paragraph{Convergence test}
Consider the isentropic vortex solution proposed in \cite{shu:ICASE}
\begin{equation*}
	\begin{cases}
		\rho(x,y) = \rho_{\infty}\left(\dfrac{T}{T_{\infty}}\right)^{\tfrac{1}{\gamma-1}}\\
	    u(x,y) = u_{\infty} - \dfrac{\beta y}{2\pi}\exp\left({\dfrac{1-r^2}{2}}\right)\\
        v(x,y)=v_{\infty}+\dfrac{\beta x}{2\pi}\exp\left({\dfrac{1-r^2}{2}}\right)\\
		p(x,y)=\rho T
	\end{cases}
\end{equation*}
where $r=\sqrt{x^2+y^2}$, $u_{\infty}=v_{\infty}=p_{\infty}=T_{\infty}=1$,  $\rho_{\infty}=p_{\infty}/T_{\infty}$,  $\beta=5$ and
\begin{equation*}
    T = T_{\infty}-\dfrac{(\gamma-1)\beta^2}{8\gamma\pi^2}\exp(1-r^2)
\end{equation*}
in the domain $[-5,5]^2$. We fix $t_{Max}=10$ in order to make it complete a period. We impose periodic boundary conditions.

Table~\ref{tab:conv:2d} shows the rate of convergence of the  $P_0P_2$ scheme compared to the rate of convergence to zero of the numerical entropy $|S_{j}^n|$ under grid refinement, both computed in 1-norm. We compare the results with the data obtained from a semidiscrete Runge-Kutta method of third order of accuracy. As expected, $S_j^n$ has again the property of reflecting the local truncation error of the scheme.
Fig.~\ref{fig6} shows the density computed with a grid of $512\times512$ cells (left panel), the 1-norm of the error  (central panel) and the corresponding numerical entropy production (right panel). Notice that the numerical entropy is greater in absolute value in the regions in which the density error is higher and it is smaller where the error is lower.

\begin{table}
\begin{center}
\pgfplotstabletypeset[
		col sep=comma,
		empty cells with={--},
		every head row/.style={before row=\toprule,after row=\midrule},
		every last row/.style={after row=\bottomrule},
		create on use/rate/.style={create col/dyadic refinement rate={1}},
		columns/0/.style={column name={N}},
		columns/1/.style={column name={RK3}},
            columns/2/.style={column name={Rate}},
            columns/3/.style={column name={S}},
            columns/4/.style={column name={Rate}},
            columns/5/.style={column name={$P_0P_2$}},
            columns/6/.style={column name={Rate}},
            columns/7/.style={column name={S}},
            columns/8/.style={column name={Rate}},
		columns={0,1,2,3,4,5,6,7,8},
		]
		{errori2d.txt}
\end{center}
\caption{Rate of convergence of RK3 and $P_0P_2$ compared with the decay to zero of $S^n$ at each time-step.}
\label{tab:conv:2d}
\end{table}

\begin{figure}[h]
	\centering
	\includegraphics[width=0.31\textwidth]{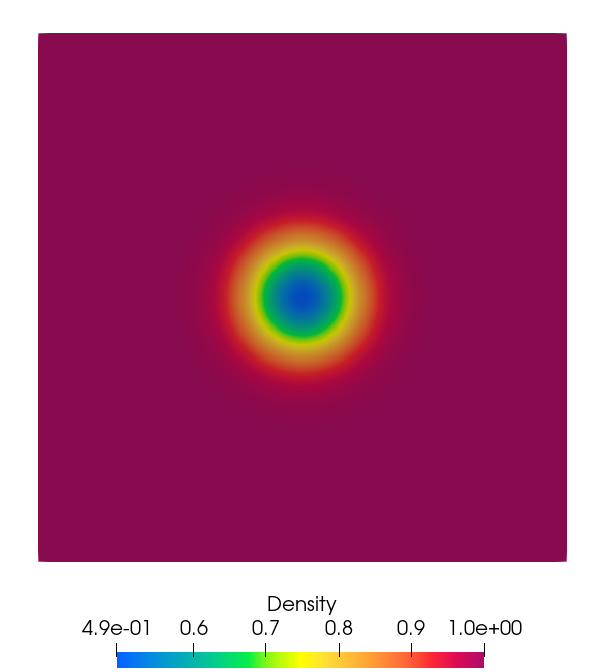}
    \hfill
    \includegraphics[width=0.31\textwidth]{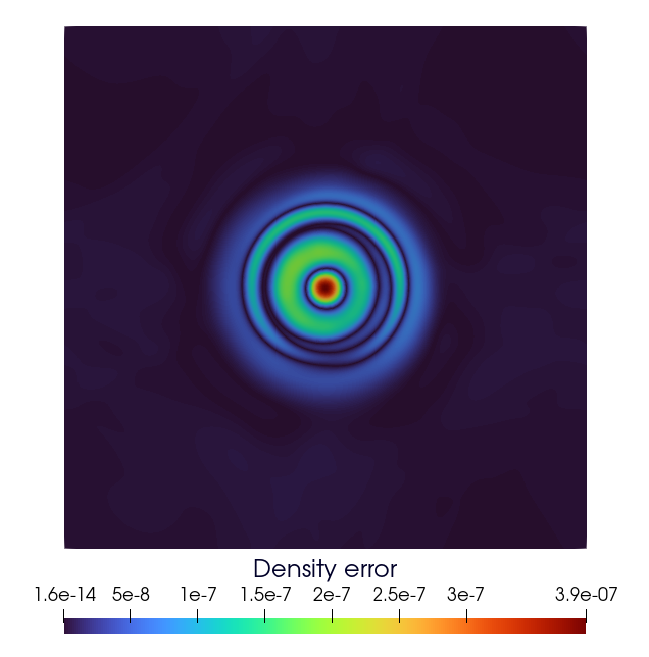}
    \hfill
    \includegraphics[width=0.31\textwidth]{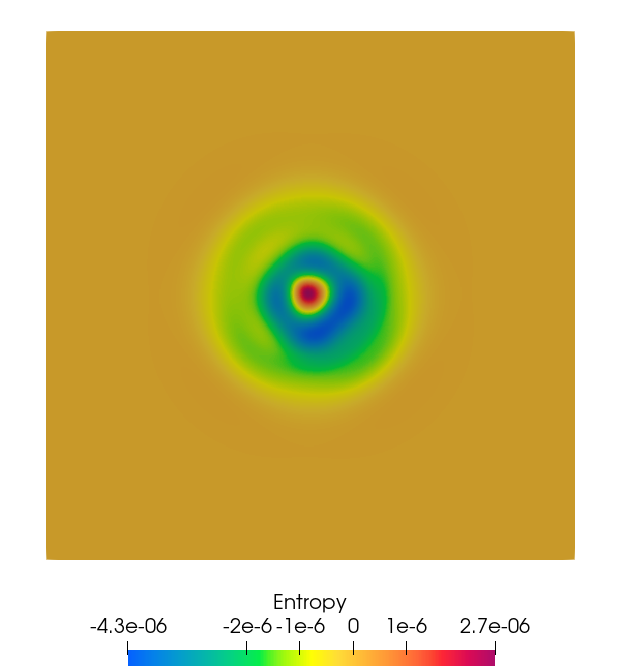}
\caption{Isentropic vortex: density, density error and numerical entropy.}
\label{fig6}
\end{figure}

\paragraph{Radial Sod problem}
Next we test the radial Sod problem in 2D, with initial conditions 
\[
\left\{
\begin{array}{lll}
 u = 0, & v=0 & \\
\rho_H = 1, & p_H=1 & \mbox{ if } x^2+y^2<0.25 \\
\rho_L=0.125, & p_L=0.1 & \mbox{ if } x^2+y^2\geq0.25
\end{array}
\right.
\]
For reasons of symmetry, we compute the solution only in the first quadrant of the domain and we consider as computational domain $[0,1]^2$ with $t_{Max}=1$, applying symmetry boundary conditions along the coordinate axis and wall boundary conditions on the outer sides.  
Firstly, we compare the solutions at $t=0.2$ using a grid of $400\times400$ cells (Fig.\ref{sod}).
We observe that, as in the one dimensional case, we can recognize the type of discontinuity just looking at the numerical entropy production. Indeed, the maximum value of $S$ is located at the shock, whereas on the contact discontinuity it is much smaller and it is almost zero on the rarefaction wave.
Letting the simulation run for a longer time, the shock wave interacts with the wall and then with the contact discontinuity. We show the solution at $t=0.6$ in Fig.\ref{sod}. Notice that also in this more complicated setting, the numerical entropy production allows to easily recognize the type of discontinuity we are dealing with.

Next we run the same test doubling the number of grid points from $32$ to $512$. In Table~\ref{tab:entShockSod}, the maximum of $|S_{ij}^n|$ is shown, which is the value of the numerical entropy production on the shock wave. Observe that it is growing as $\mathcal{O}(1/\Delta x)$, as we expected.

\begin{table}
\begin{center}
\pgfplotstabletypeset[
		col sep=comma,
		empty cells with={--},
		every head row/.style={before row=\toprule,after row=\midrule},
		every last row/.style={after row=\bottomrule},
		create on use/rate/.style={create col/dyadic refinement rate={1}},
		columns/0/.style={column name={N}},
		columns/1/.style={column name={$P_0P_2$}},
            columns/2/.style={column name={Rate}},
		columns={0,1,2},
		]
		{sodshock.txt}
\end{center}
\caption{Infinity norm of $S^n$ by $P_0P_2$.}
\label{tab:entShockSod}
\end{table}

\begin{figure}[h]
\centering
\begin{tabular}{cc}
\includegraphics[width=0.47\textwidth]{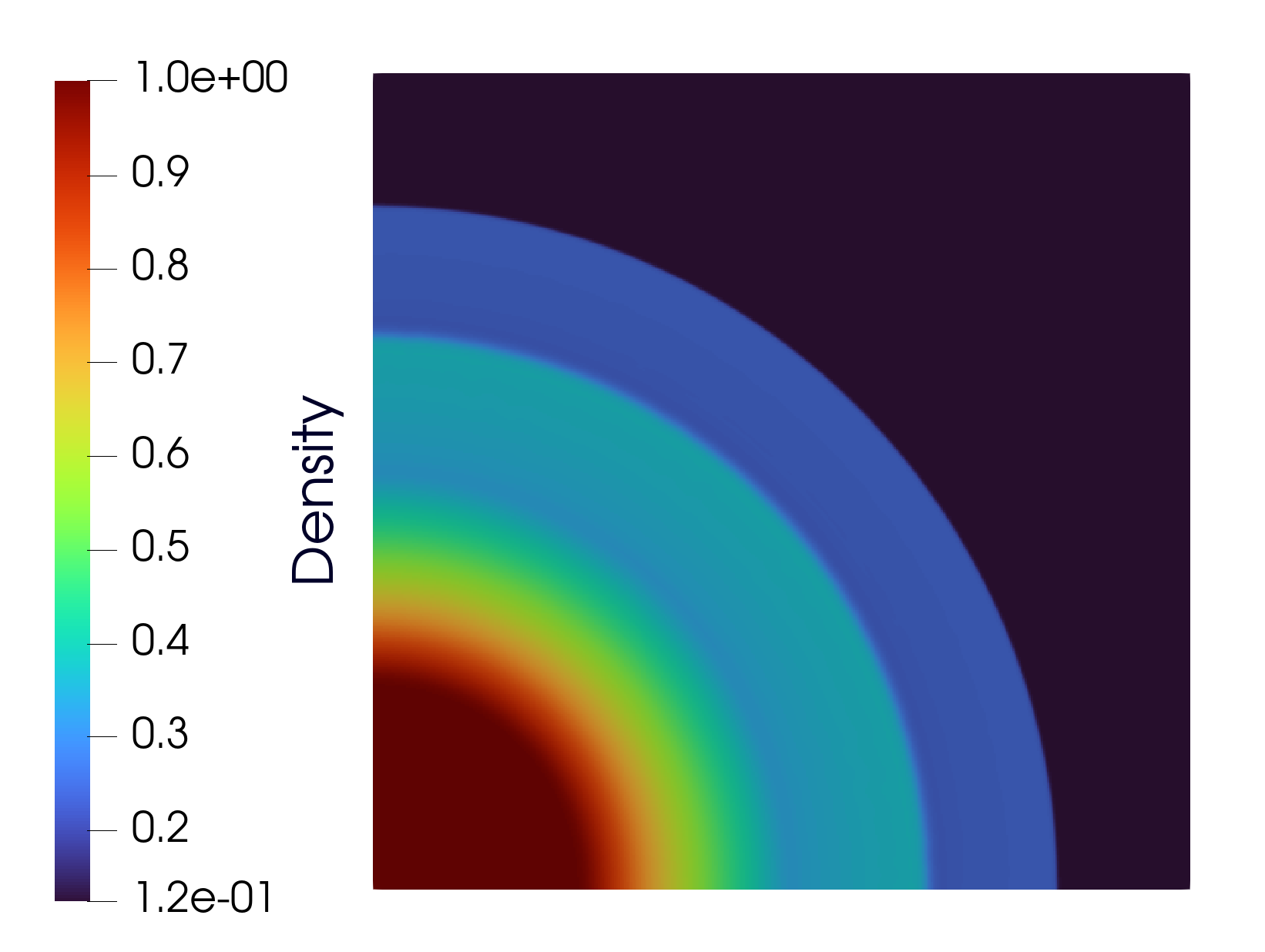}&
\includegraphics[width=0.5\textwidth]{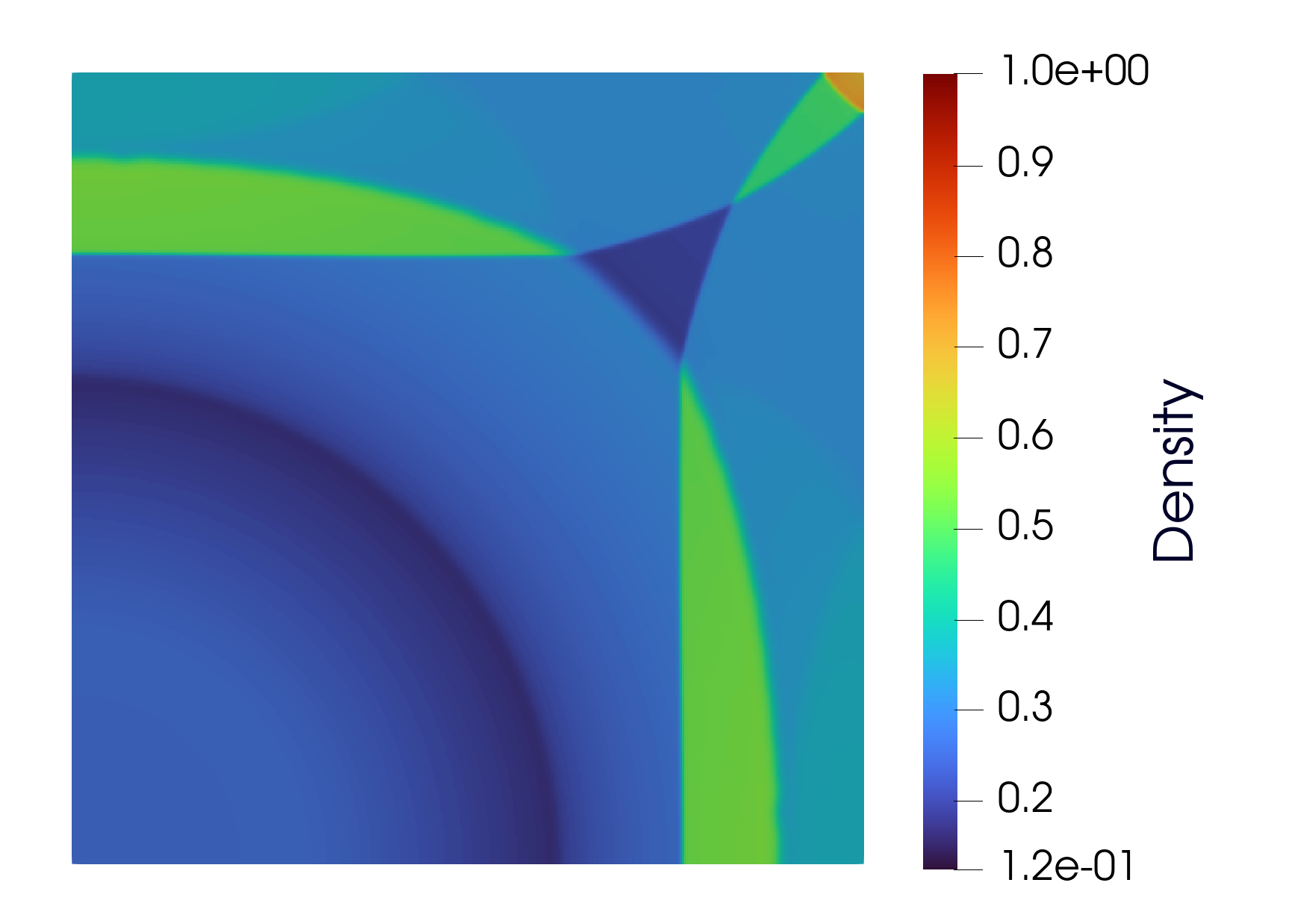} \\
\includegraphics[width=0.47\textwidth]{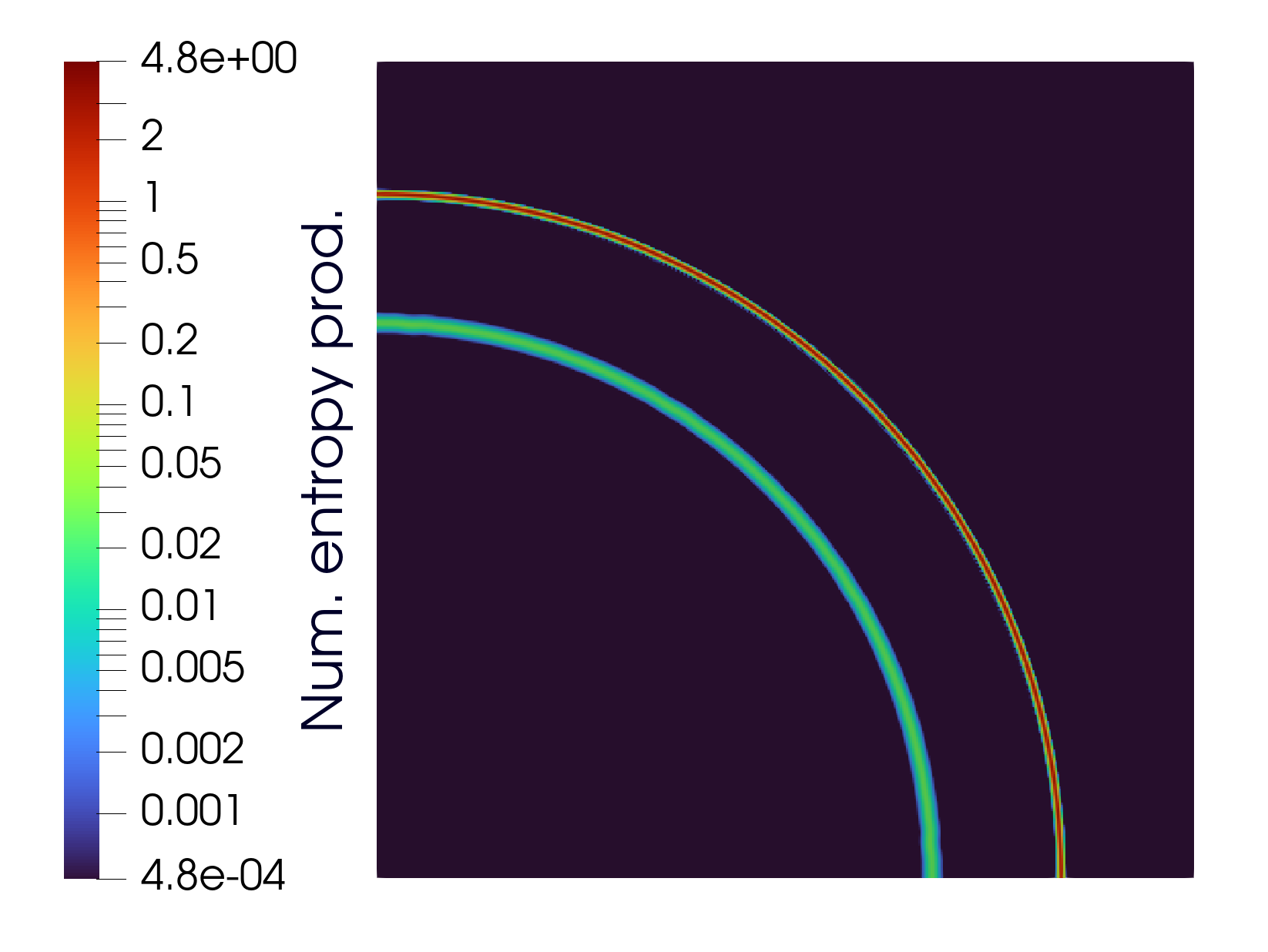}& 
\includegraphics[width=0.5\textwidth]{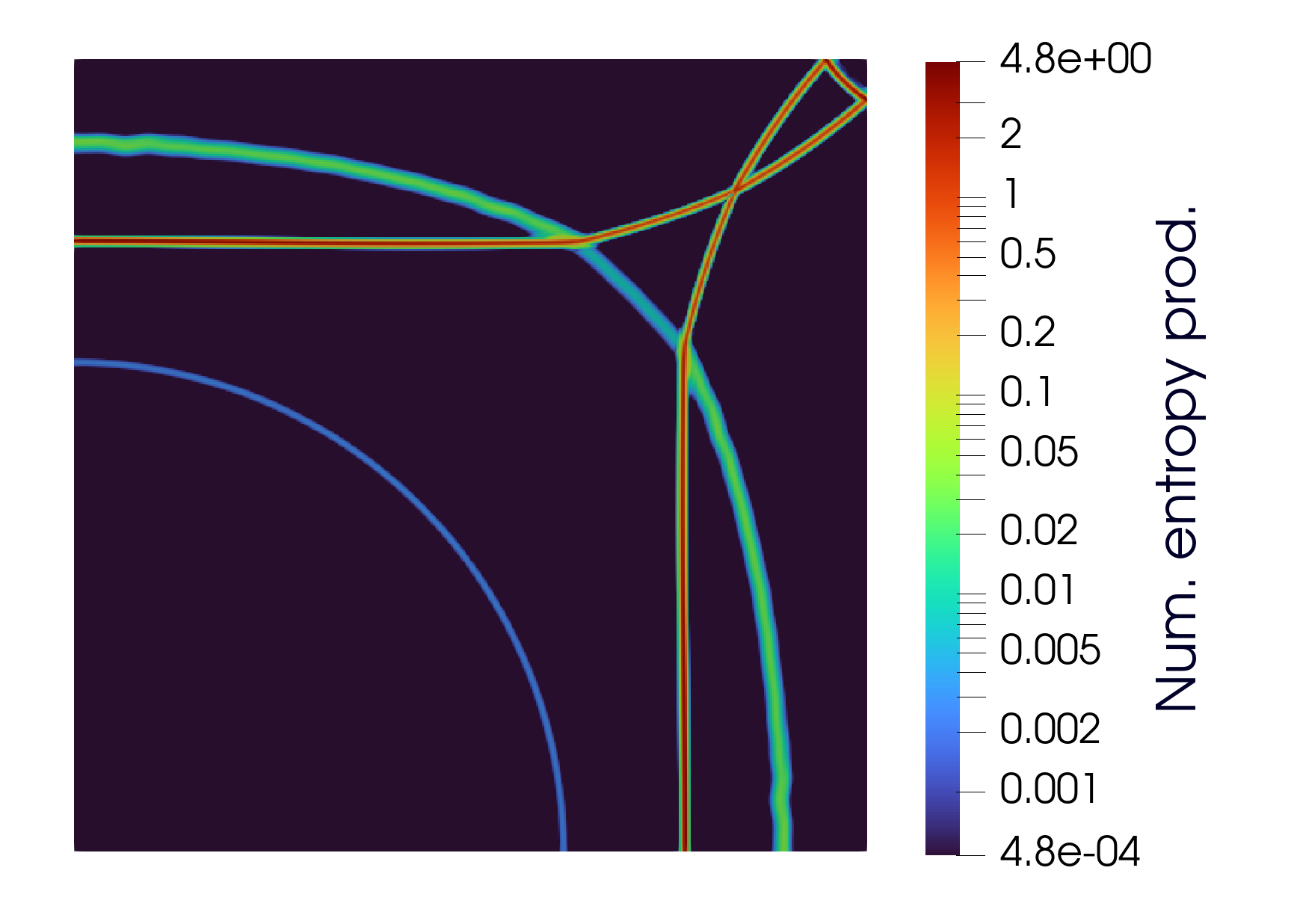}
\end{tabular}
\caption{Radial Sod problem: density and corresponding numerical entropy production (in logarithmic scale) computed with $P_0P_2$ at $t=0.2$ (left panels) and at $t=0.6$ (right panels).}
\label{sod}
\end{figure}

\paragraph{Shock-Bubble interaction}
Next we propose the shock-bubble interaction problem \cite{CT:09}, with initial data 
\[
\begin{aligned}
   & [\rho,u,v,p]=[11/3,2.7136021011998722,0,10] &\mbox{ if } x<0,\\
   & [\rho,u,v,p]=[1,0,0,1] & \mbox{ if } \sqrt{((x-0.3)^2+y^2)}>0.2, 
\end{aligned}
\]
and $[\rho,u,v,p]=[0.1,0,0,1]$ elsewhere. We take as domain $[-0.1,1.6]\times[-0.5,0.5]$ and as computational domain $[-0.1,1.6]\times[0,0.5]$ because of the symmetry in $y$. This initial data gives rise to a shock wave that hits a stationary bubble of gas at low pressure. We impose Dirichlet boundary conditions on the left, free-flow on the right, symmetry at $y=0$ and solid walls at $y=0.5$. 

When the shock hits the bubble, it gives rise to a complex unstable configuration, that leads to different final shapes using different schemes or different resolutions. 
In Fig.~\ref{fig10} and Fig.~\ref{fig11} we show the result at $t=0.15$ and $t=0.4$, after the interaction between the shock and the bubble, using a $P_0P_2$ scheme and a grid of $850\times250$ cells. Despite the complexity of the solution, notice that $S_j^n$ again detects the cells in which the solution exhibit some kind of discontinuity.
\begin{figure}[h]
	\centering
	\includegraphics[width=0.7\textwidth]{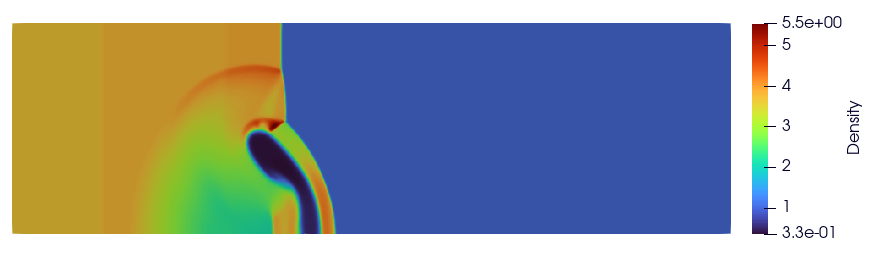}
        \includegraphics[width=0.7\textwidth]{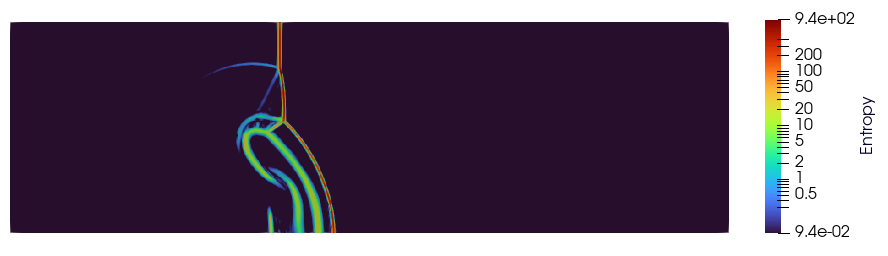}
\caption{Shock-bubble interaction problem: density and numerical entropy production (in logarithmic scale) at time $t=0.15$.}
\label{fig10}
\end{figure}
\begin{figure}[h]
	\centering
        \includegraphics[width=0.7\textwidth]{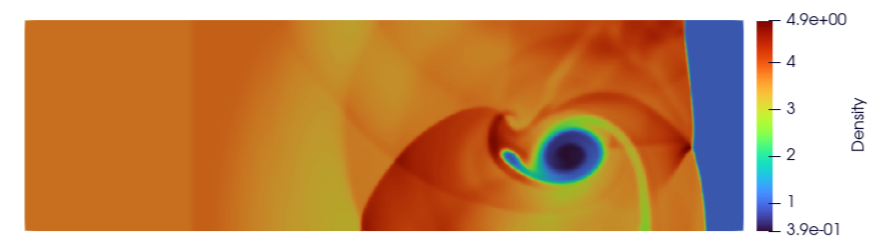}
        \includegraphics[width=0.7\textwidth]{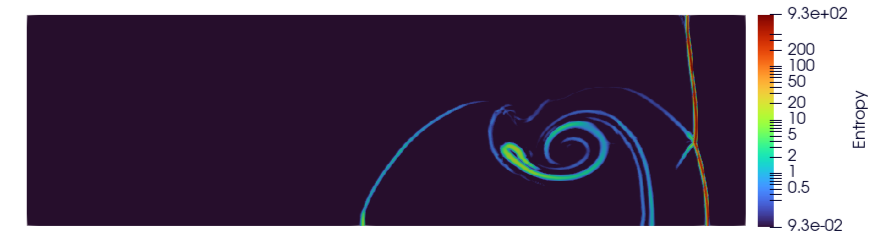}
\caption{Shock-bubble interaction problem: density and numerical entropy production (in logarithmic scale) at time $t=0.4$.}
\label{fig11}
\end{figure}

\section{Adaptivity}

Since the numerical entropy production scales as the local truncation error of the method and it detects the presence of discontinuities, it can be exploited as an indicator to drive the adaptation in adaptive schemes.

When dealing with high-order numerical methods, it is possible to observe a reduction of the real accuracy and the presence of spurious oscillations near discontinuities. In this section, we describe an example of adaptive scheme, in which $S_j^n$ is used as an indicator to eliminate those spurious oscillations, locally reducing the order of accuracy.

The idea is to fix a reference value $S_{ref}$ for $S_j^n$ and, whenever $|S_j^n|\geq S_{ref}$, in the corresponding cell the solution is recomputed using a lower order predictor. The value $S_{ref}$ is chosen such that those cells in which the solution is smooth or does not present spurious oscillations are not recomputed.

The scheme works as follows. 

\begin{minipage}{\textwidth}
\vspace{0.5cm}
\hrule

\vspace{0.1cm}
\textbf{Algorithm}

\vspace{0.1cm}
\hrule
\begin{enumerate}
    \item Apply a time step of the FV-ADER scheme $\eqref{fv5}$.
    \item Compute the numerical entropy production $S_j^n$ with the formula $\eqref{sader}$.
    \item Mark the cells in which $|S_j^n|\geq S_{ref}$.
        \begin{itemize}
            \item In the marked cells, recompute the ADER predictor with a lower order of accuracy, 
            \item apply \eqref{fv5} to update the numerical solution in the marked cells and their first neighbours.
        \end{itemize}
\end{enumerate}    
\vspace{0.1cm}
\hrule
\end{minipage}

\vspace{0.5cm}
Notice that in order to have a conservative scheme, we need to recompute the numerical solution also in the neighbors of the marked cells. In these cells the numerical fluxes are computed using both the high-order and the low-order predictor. However, this does not affect significantly the computational cost of the method.

The reference value $S_{ref}$ is of course problem-dependent. Since the scheme reduces the order of accuracy, the area in which we recompute the solution should be precisely localized. Indeed, we want to lose as little information as possible, keeping a high-order scheme if the solution is smooth or if there are not too many oscillations. Therefore, it is not convenient to fix a-priori a reference value, but it should be chosen with respect to the problem, looking at the cells in which one wants to intervene. A general discussion on how to choose $S_{ref}$ can be found in \cite{SL:18:AMRMOOD}, where the problem of choosing proper refinement thresholds for AMR is discussed. In particular, $S_{ref}$ could be computed running first the simulation on a coarse mesh, which is usually very cheap. Then, exploiting the scaling behaviour of $S_j^n$ under different flow regularities, the expected values of $S_j^n$ on the fine grid for the real numerical computation can be computed and used to set $S_{ref}$ according to the features of the solution of interest.

If we are dealing with particularly strong initial data, it may happen that, when computing the ADER predictor at the beginning of each time step, high-order reconstructions generate spurious oscillations computing inconsistent data, for example negative pressure or density in Euler equations of gasdynamics. In this case, we introduce adaptation also at that level: in those cells in which a negative pressure has been generated, we recompute the reconstruction using lower degree polynomials. This idea is reminiscent of the  Physical Admissibility Detection criteria (PAD) of the MOOD approach \cite{mood} and is also employed to trigger the subcell finite volume limiter for ADER-DG schemes \cite{mood:subcell}.

\begin{minipage}{\textwidth}
\vspace{1cm}
\hrule

\vspace{0.1cm}
\textbf{Algorithm}

\vspace{0.1cm}
\hrule
\begin{enumerate}
    \item Apply a time step of the FV-ADER scheme $\eqref{fv5}$:
    \begin{enumerate}
        \item Apply high-order reconstruction. (e.g. CWENO)
        \item Compute ADER predictor:
        \begin{itemize}
            \item If the high-order reconstruction does not generate unphysical values (e.g. negative pressure), compute high-order predictor.
            \item If the high-order reconstruction has generated unphysical values, compute the predictor using a lower-order reconstruction.
        \end{itemize}
        \item Compute numerical fluxes.
        \item Update the solution.
     \end{enumerate}
    \item Compute the numerical entropy production $S_j^n$ with the formula $\eqref{sader}$.
    \item Mark the cells in which $|S_j^n|\geq S_{ref}$.
        \begin{itemize}
            \item In the marked cells, recompute the ADER predictor with a lower order of accuracy, 
            \item apply \eqref{fv5} to update the numerical solution in the marked cells and their first neighbours.
        \end{itemize}
\end{enumerate}    
\vspace{0.1cm}
\hrule
\end{minipage}

\subsection{Adaptive scheme tests}

Consider again Euler equations of gasdynamics. We implement an adaptive $P_0P_2$ numerical scheme: the base scheme is the third order $P_0P_2$ tested previously and the adaptive procedure reduces its order to 1 locally in order to better control spurious oscillations. Depending on the problem, we fix a reference value $S_{ref}$ for $S_j^n$. This value is chosen running the simulation firstly on a coarse mesh. In those cells in which $|S_j^n|\geq S_{ref}$, the solution is recomputed using a $P_0P_0$ scheme.

\paragraph{Implosion}
We consider the problem of a diamond-shaped converging shock with initial data $[\rho,u,v,p]=[1,0,0,1]$ if $x+y>0.15$ and $[\rho,u,v,p]=[0.125,0,0,0.14]$ elsewhere \cite{HLL:99}. For symmetry, we restrict the computational domain to $[0,0.3]^2$. 

We compute the numerical solution with a grid of $800\times800$ cells. We use the CWZb3 reconstruction \cite{STP23:cweno:boundary}. The initial discontinuity gives rise to a shock and a contact discontinuity, both moving towards the origin, and to a rarefaction wave that moves outwards. In Fig.~\ref{impl} we show the solution at time $t=0.03$, $t=0.06$ and $t=0.09$ {\revDue with the corresponding numerical entropy production, which also in this case highlights the presence and the kind of discontinuities}.

Plotting the profile of the density computed with a non-adaptive $P_0P_2$ scheme on a grid of $800\times800$ at time $t=0.03$, we notice some oscillations between the contact discontinuity and the rarefaction wave {\revDue (Fig.~\ref{impl1}, left panel)}. 
We recompute the solution using the proposed adaptive scheme, fixing $S_{ref}=0.01$ and $S_{ref}=0.1$. We compare it with the numerical solution computed with non-adaptive $P_0P_2$ and $P_0P_0$ schemes {\revDue (Fig.~\ref{impl1}, right panel)}. We observe that choosing an appropriate $S_{ref}$, we manage to reduce the oscillations produced by the non-adaptive $P_0P_2$ scheme without loosing too much resolution. From these results, a good value for $S_{ref}$ in this case is $0.1$, since it computes the correct intermediate state, still avoiding oscillations.

\begin{figure}[h]
\centering
\begin{tabular}{ccc}
\includegraphics[width=0.32\textwidth]{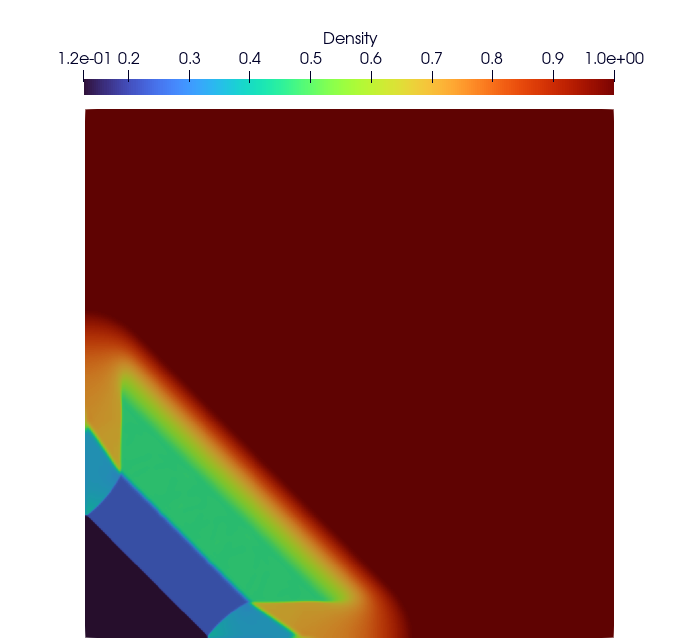} &
\includegraphics[width=0.32\textwidth]{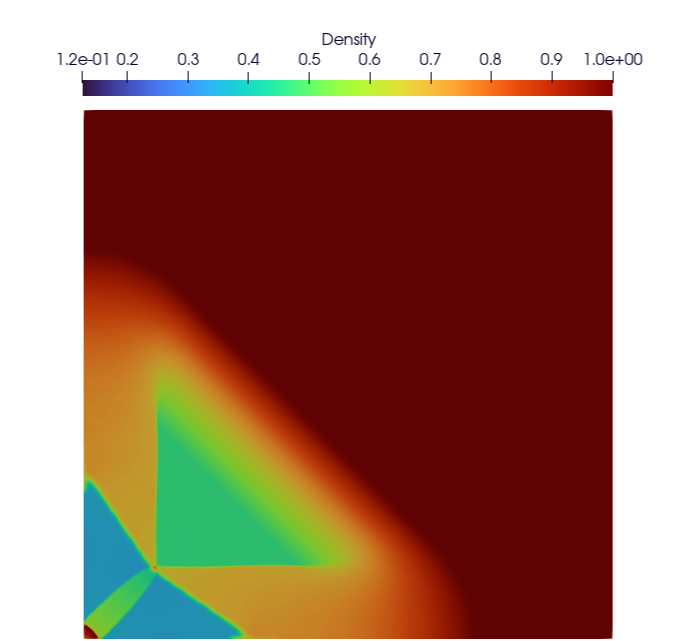} &
\includegraphics[width=0.32\textwidth]{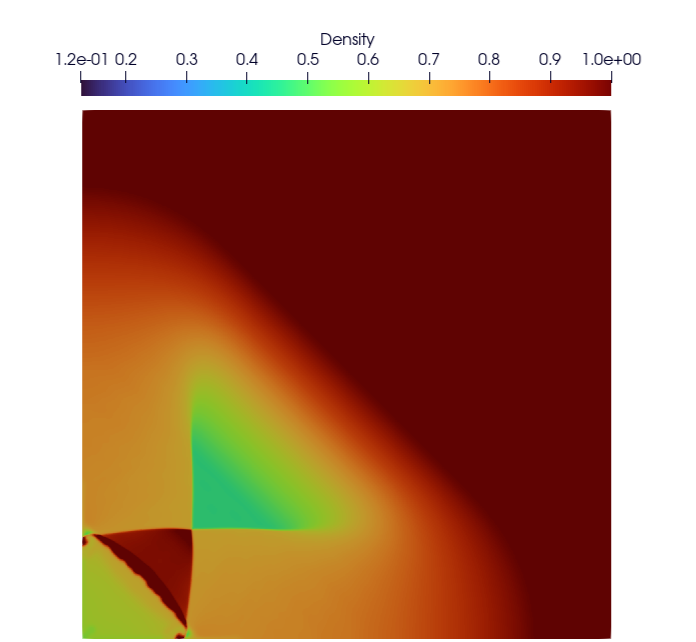} \\
\includegraphics[width=0.32\textwidth]{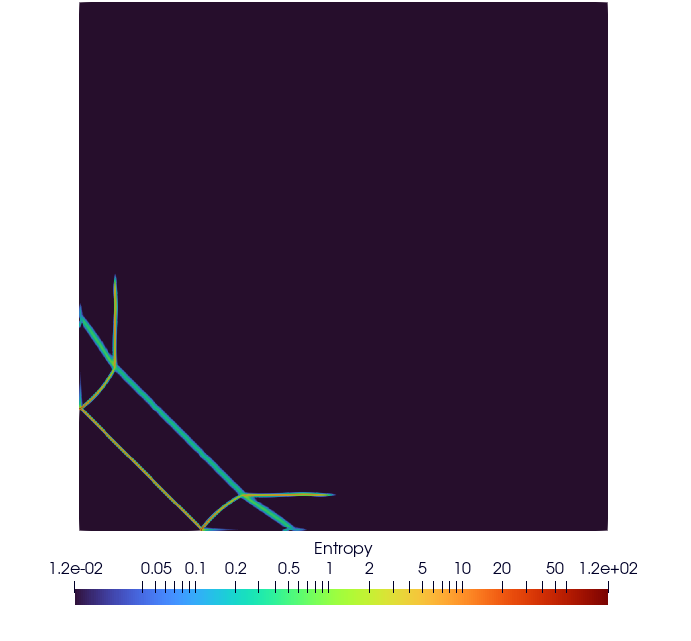} &
\includegraphics[width=0.32\textwidth]{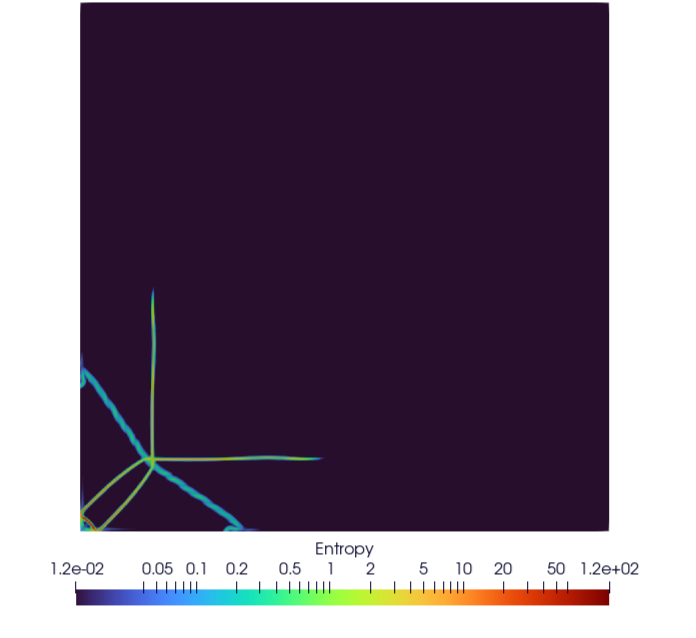} &
\includegraphics[width=0.32\textwidth]{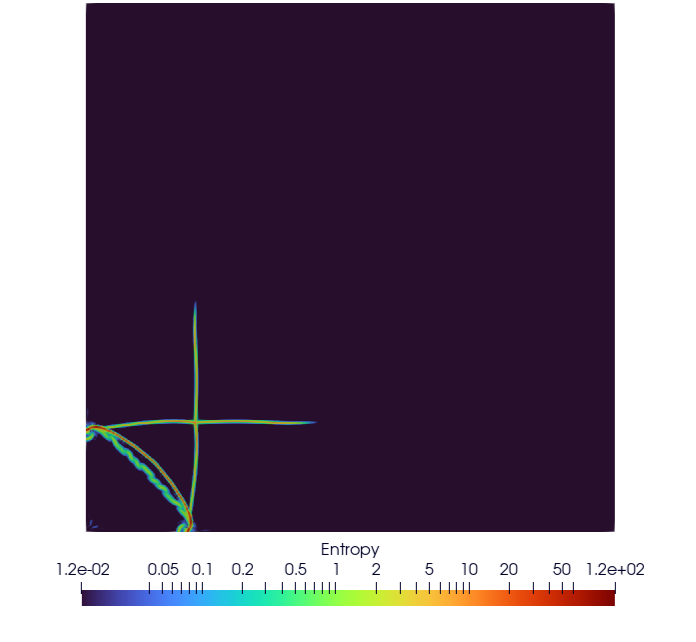}
\end{tabular}
\caption{Implosion problem: density and numerical entropy production (in logarithmic scale) at time $t=0.03$, $t=0.06$ and $t=0.09$.}
\label{impl}
\end{figure}

\begin{figure}[h]
    \centering
    \includegraphics[width=0.48\textwidth]{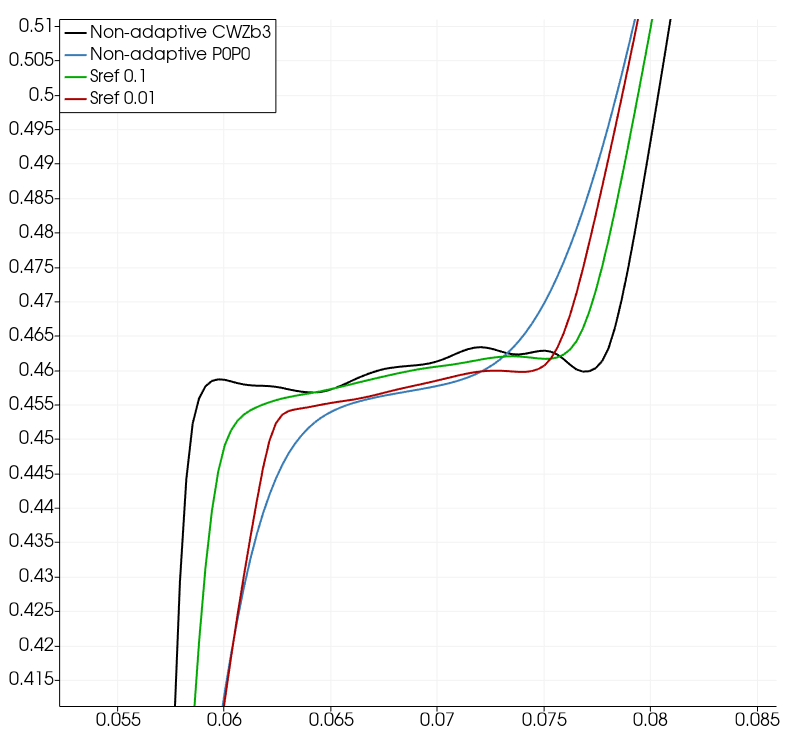}
    \includegraphics[width=0.48\textwidth]{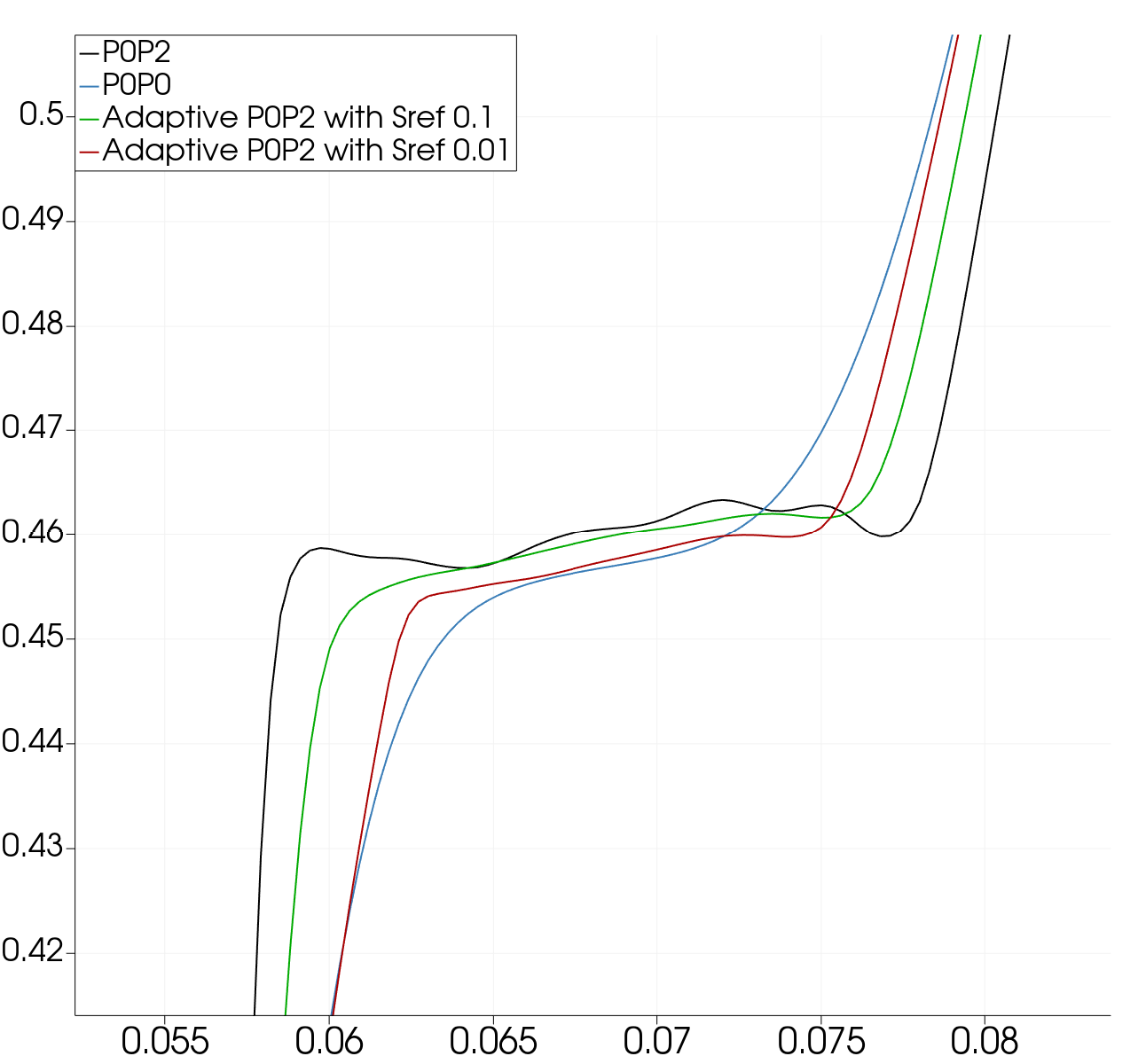}
\caption{Implosion problem. On the left: solution with $P_0P_2$ scheme at time $t=0.03$ on a grid of $800\times800$. On the right: zoom on the step between contact discontinuity and rarefaction wave computed using $P_0P_2$ and $p$-adaptive $P_0P_2$ schemes with $S_{ref}$ 0.1 and 0.01.}
\label{impl1}
\end{figure}

\paragraph{Woodward Colella test}
Next we present a test for which the code would break using a non-adaptive $P_0P_2$ scheme. We refer to \cite{WC:84}. We evolve as initial condition a gas with $\rho(x)=1$, $v(x)=0$ and
\begin{equation*}
    p(x)=
    \begin{cases}
        1000 & \mbox{ if } x\leq0.1 \\
        0.01 & \mbox{ if } x\in(0.1,0.9) \\
        100 & \mbox{ if } x\geq0.9
    \end{cases}
\end{equation*}
on $\Omega=[0,1]$ with $t_{Max}=0.038$ and wall boundary conditions. 

Firstly, we show the density solution at time $t=0.01$
in the first panel of Fig.~\ref{wc3}. In the right part of the domain, the solution presents a strong shock and a strong rarefaction that are pushed in opposite directions by the high initial pressure jump at $x=0.9$. The rarefaction is being reflected from the wall, giving rise to an almost constant density and pressure region. The shock and the rarefaction waves are separated by two constant states and a strong contact discontinuity. On the left of the domain, the solution presents a similar but more evolved structure due to the higher initial pressure jump at $x=0.1$. The rarefaction has already been reflected and it is already interacting with the strong contact discontinuity at $x=0.3$.
 In Fig.~\ref{wc3} we compare the numerical solution computed with a non-adaptive $P_0P_2$ scheme with the one computed with the corresponding adaptive scheme, fixing $S_{ref}=1$ and using a grid of 9600 cells and observe that the local order reduction is employed in very few cells. 

When the strong shock and the strong contact discontinuity move closer, the non-adaptive $P_0P_2$ scheme breaks down, whereas the adaptive one does not. We show the numerical solution at time $t=0.028$ in the middle panel of Fig.~\ref{wc3}, just after the collision, and at the final time $t=0.038$ in the right panel, using the adaptive $P_0P_2$ scheme with $S_{ref}=1$. We compare the solutions with the one computed with the first order $P_0P_0$ scheme. Notice that the adaptive approach allows to reach a better resolution than the use of a scheme of first order of accuracy. Notice also that the number of cells in which the order of accuracy is decreased (red line) is very small, and that they are tightly located around the main discontinuities.

\begin{figure}[h]
    \centering
    \includegraphics[width=0.32\textwidth]{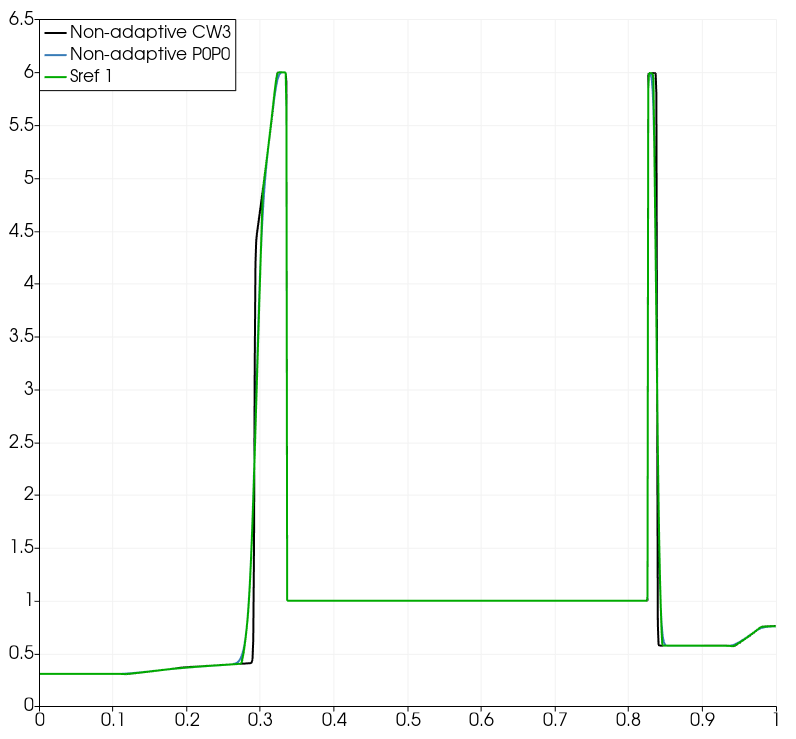}
    \includegraphics[width=0.32\textwidth]{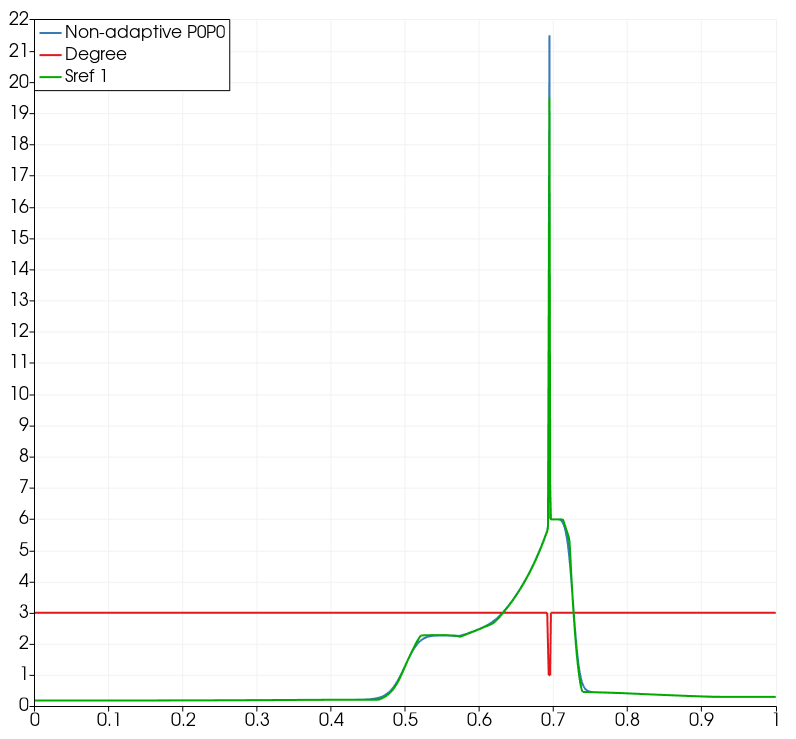}
    \includegraphics[width=0.32\textwidth]{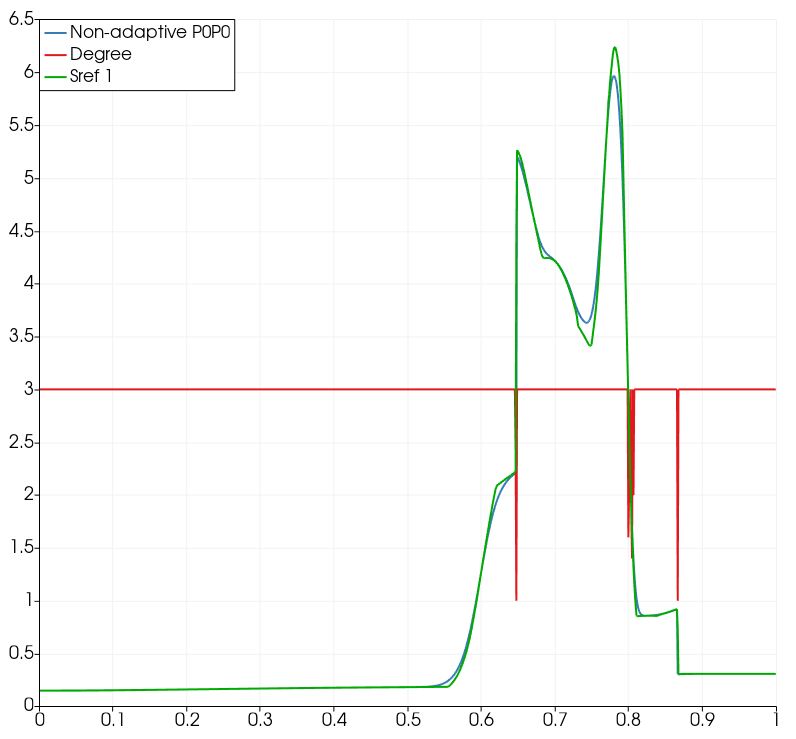}
\caption{ \revDue Woodward-Colella problem at time $t=0.01$, $t=0.028$ and $t=0.038$ using adaptive $P_0P_2$ scheme with $S_{ref}=1$ compared to the solution using $P_0P_0$. In the first panel, also the solution computed using $P_0P_2$ is shown, before it breaks down. The red line represents the order of accuracy of the adaptive scheme.}
\label{wc3}
\end{figure}

\paragraph{123-test problem}
Another challenging test is the 123-test problem from \cite{Toro:book}. The solution presents two strong rarefactions and a {\revDue trivial} stationary contact discontinuity. The initial condition is
\begin{equation*}
    \begin{cases}
        (\rho_L,v_L,p_L)=(1,-2,0.4) & \mbox{ if } x\leq0 \\
        (\rho_R,v_R,p_R)=(1, \text{ }\text{ }2,0.4) & \mbox{ if } x>0
    \end{cases}
\end{equation*}
in $[-0.5,0.5]$ with {\revUno $t_{Max}=0.15$}. We compute the numerical solution using the $p$-adaptive $P_0P_2$ scheme with $S_{ref}=200$ and a grid of 200 cells. At the beginning of the simulation (left panel of Fig.~\ref{fig123}), the order of the scheme is reduced in the cells in the center of the domain, in which the pressure is small. After the first timesteps, the simulation keeps the third order of accuracy until final time, which is shown in right panel of Fig.~\ref{fig123}. {\revUno We compare the quality of the final solution with the exact one.}
\begin{figure}
	\centering
        \includegraphics[width=1\textwidth]{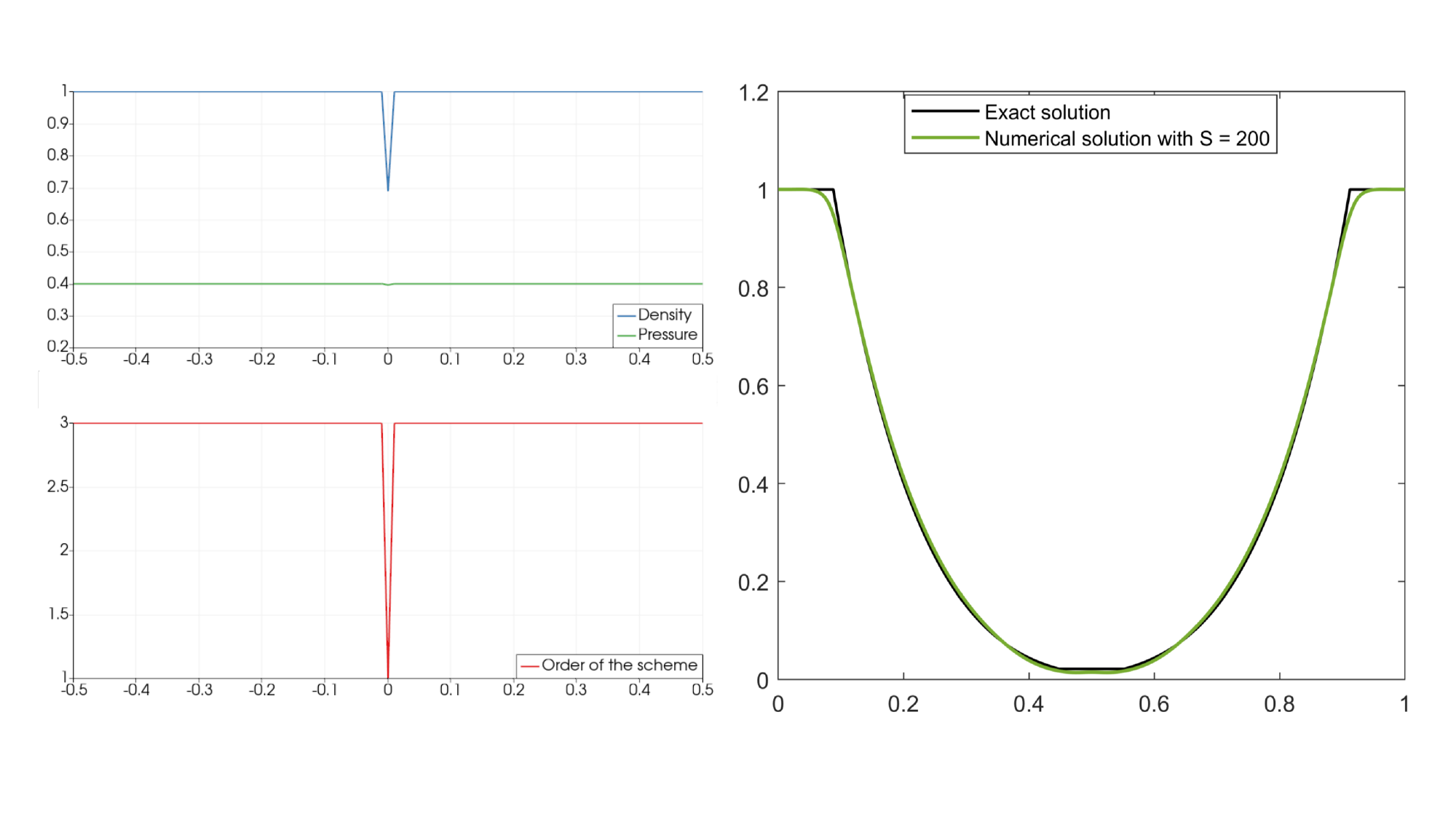}
\caption{123-test problem computed with $p$-adaptive $P_0P_2$ scheme fixing $S_{ref}=200$ with a grid of 200 cells.
{\revUno On the left: density, pressure and corresponding order of the scheme after the first timestep. In the following steps the scheme preserves order 3 in the whole domain. On the right: final solution at time $t_{Max}=0.15$.}}
\label{fig123}
\end{figure}

\paragraph{Sedov test}
A 2-dimensional test that breaks using a non-adaptive $P_0P_2$ scheme is the Sedov test \cite{Sedov:1959}. Its initial conditions are $[\rho,u,v,p,\gamma]=[1,0,0,10^{-6},\tfrac{7}{5}]$ over the domain $[-1.2,1.2]\times[-1.2,1.2]$. In the cells near the origin, the pressure value is 0.244816. This generates a point explosion of the gas. 
The non-adaptive $P_0P_2$ scheme breaks after a few time steps. One solution could be the use of a lower-order scheme, e.g. the $P_0P_0$. However, the adaptive approach allows to reach a better resolution in the cells in which the discontinuity is not present. In Fig.~\ref{sedov1} we show the solution computed with the adaptive $P_0P_2$ scheme on a grid of $400\times400$, fixing $S_{ref}=100$. The reference value has been chosen such that the number of cells in which the order is decreased is small enough not to compromise the accuracy of the solution in those regions in which it is not necessary. The red line detects the cells which have been computed with the lower-order scheme. Notice that the order is reduced only in those cells near the shock wave.

\begin{figure}[h!]
    \centering
    \includegraphics[width=0.48\textwidth]{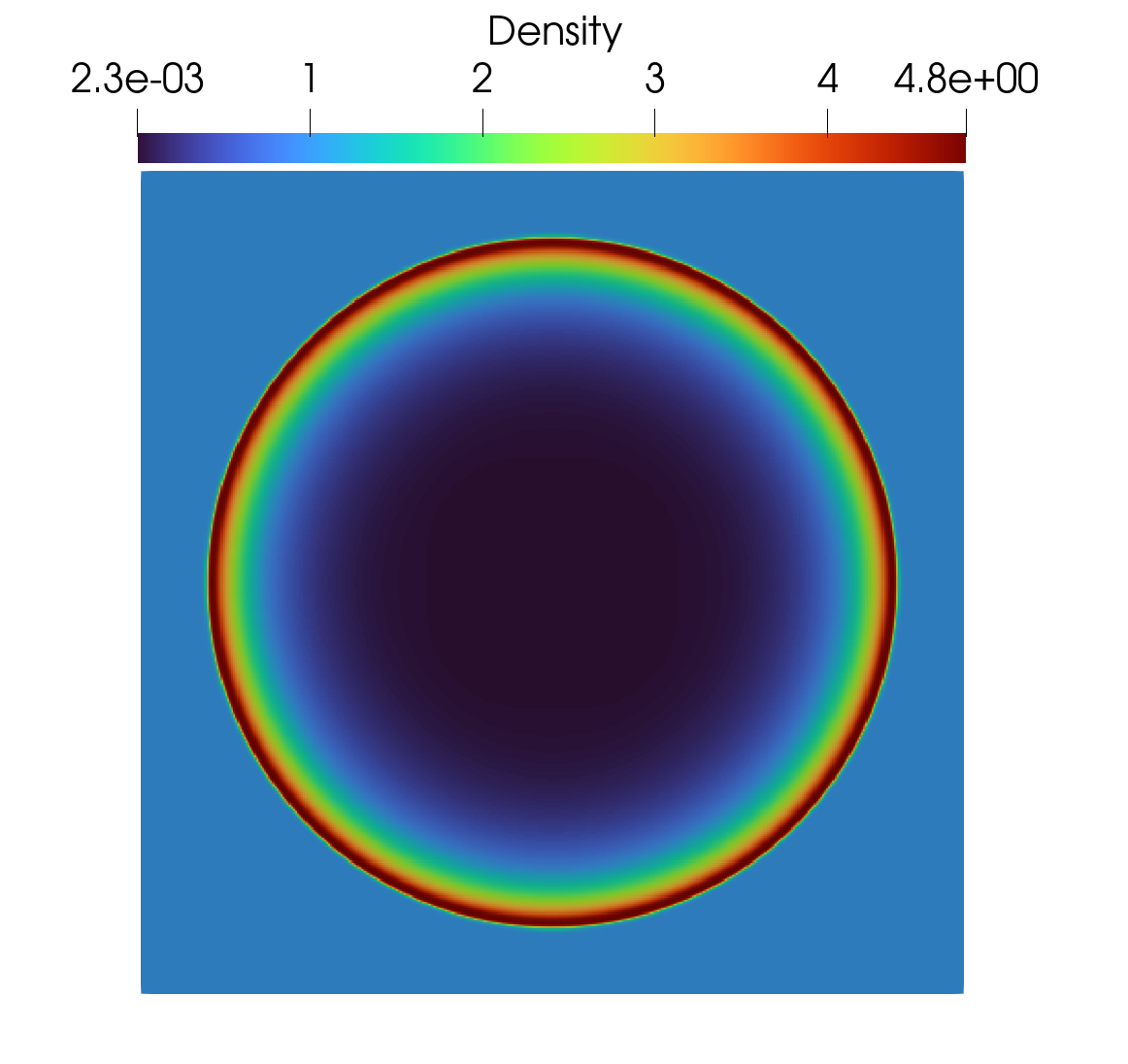}
    \hfill
    \includegraphics[width=0.48\textwidth]{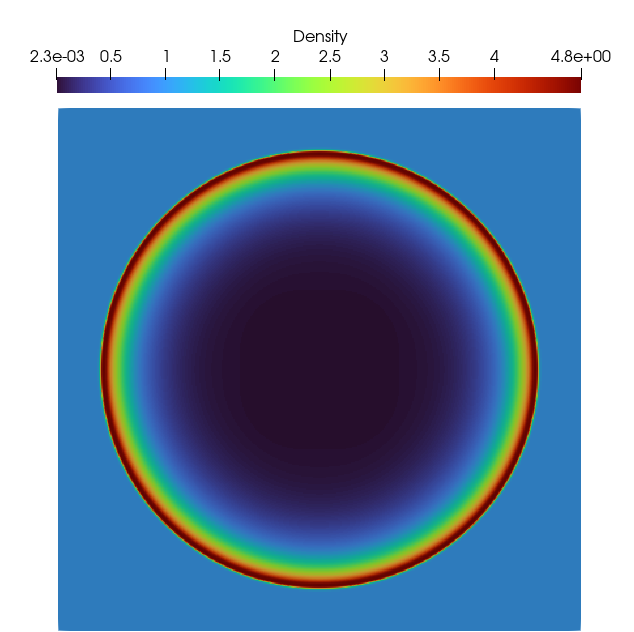}
\caption{Sedov problem computed with $p$-adaptive $P_0P_2$ scheme with $S_{ref}=100$ on a grid of $400\times400$ cells.}
\label{sedov1}
\end{figure}

\section{Conclusions}

In this work, we have proposed a definition for the numerical entropy production $S_j^n$ in the case of FV-ADER $P_0P_M$ schemes. We have proved theoretically, on multi-dimensional arbitrary grids, and verified numerically that at each time step $S_j^n$ converges to zero with the same rate of convergence of the numerical scheme in the case of smooth solutions and we have shown numerical evidence of the essentially negative sign of $S_j^n$ in the general case of a system of conservation laws. We have shown numerical evidence of the different behaviour of $S_j^n$ with respect to the local nature of the solution. On a general flow, $S_j^n$ is always bounded by terms of order $\mathcal{O}\left(1/\Delta t\right)$ and, when a shock crosses the space-time volume $[t^n,t^{n+1}]\times\Omega_j$, $S_j^n\sim C/\Delta t$. On smooth flows, including the inner part of a rarefaction wave, $S_j^n=\mathcal{O}(\Delta t^p)$ reflecting the local truncation error. On solutions with intermediate regularity, $S_j^n=\mathcal{O}(\Delta t^r)$ with $r=1$ on kinks and rarefaction corners and $r=0$ on contact discontinuities, again reflecting the local truncation error of the scheme.

Because of its properties, the numerical entropy production can be considered as a good error or smoothness indicator also in adaptive FV-ADER schemes. In this paper, we have proposed a simple example of $p$-adaptive scheme, as an illustration of a possible employment of this indicator in adaptive schemes. The proposed scheme modifies locally the order of accuracy with respect to the nature of the solution, guided by $S_j^n$. Indeed, near discontinuities high-order schemes may generate spurious oscillations, that can be reduced decreasing locally the order of accuracy. The numerical entropy production is a good a-posteriori indicator for this purpose. We intend to pursue the development and study of a more complete $hp$-adaptive scheme along the lines of the semidiscrete ones of \cite{SCR:cwenoAMR,SL:18:AMRMOOD} in a future work.

\section*{Acknowledgements}
This work was supported by the PRIN project ``High order structure-preserving semi-implicit schemes for hyperbolic equations'' (code 2022JH87B4).
Both authors are members of the GNCS–INDAM (National Group for Scientific Computing, Italy).

\bibliography{entfvader.bib}

\end{document}